\newcommand{\Q}{\mathbb Q}
\newcommand{\N}{\mathbb{N}}
\newcommand{\Z}{\mathbb{Z}}
\newcommand{\R}{\mathbb{R}}
\newcommand{\conv}{\operatorname{conv}}
\newcommand{\intr}{\operatorname{int}}
\newcommand{\cl}{\operatorname{cl}}
\newcommand{\floor}[1]{\left\lfloor#1\right\rfloor}
\renewcommand{\epsilon}{\varepsilon}
\def\ve#1{\mathchoice{\mbox{\boldmath$\displaystyle\bf#1$}}
{\mbox{\boldmath$\textstyle\bf#1$}}
{\mbox{\boldmath$\scriptstyle\bf#1$}}
{\mbox{\boldmath$\scriptscriptstyle\bf#1$}}}
\newcommand{\x}{{\ve x}}
\newcommand{\z}{{\ve z}}
\renewcommand{\b}{{\ve b}}
\newcommand{\lp}{\textsf{LP}}
\newcommand{\gmi}{\textsf{GMI}}
\newcommand{\ip}{\textsf{IP}}
\newcommand{\X}{\textsf{X}}
\newcommand{\GX}{\textsf{GX}}
\newcommand{\XG}{\textsf{XG}}
\newcommand{\GXG}{\textsf{GXG}}
\newcommand{\Best}{\textsf{Best}}
\newcommand{\aff}{\operatorname{aff}}
\numberwithin{equation}{section}
\DeclareMathOperator* {\Flt}{Flt}
\title{Can cut generating functions be good {\em and} efficient?\thanks{Submitted to the editors.
\funding{Both the authors gratefully acknowledge the support from NSF grant CMMI1452820}}}
\author{Amitabh Basu\thanks{Department of Applied Mathematics and Statistics, Johns Hopkins University, Baltimore, MD (\email{abasu9@jhu.edu}).}
\and Sriram Sankaranarayanan \thanks{Department of Civil Engineering, Johns Hopkins University, Baltimore, MD (\email{srirams@jhu.edu}).}}
\begin{document}
\maketitle

\begin{abstract} Making cut generating functions (CGFs) computationally viable is a central question in modern integer programming research. One would like to find CGFs that are simultaneously {\em good}, i.e., there are good guarantees for the cutting planes they generate, and {\em efficient}, meaning that the values of the CGFs can be computed cheaply (with procedures that have some hope of being implemented in current solvers). We investigate in this paper to what extent this balance can be struck. We propose a family of CGFs which, in a sense, achieves this harmony between {\em good} and {\em efficient}. In particular, {we provide a parameterized family of $b+\Z^n$ free sets to derive CGFs from  and} show that our proposed CGFs give a good approximation of the closure given by CGFs obtained from {all} maximal $b+\Z^n$ free sets and their so-called {\em trivial liftings}, and simultaneously, show that these CGFs can be computed with explicit, efficient procedures. {We provide a constructive framework to identify these sets as well as computing their trivial lifting. We follow it up with computational experiments to demonstrate this and to evaluate their practical use. Our proposed family of cuts seem to give some tangible improvement on randomly generated instances compared to GMI cuts; however, in MIPLIB 3.0 instances, and vertex cover and stable problems on random graph instances, their performance is poor.} 
\end{abstract}

\begin{keywords}
Integer programming, Multi-row cuts, Lattice-free convex sets, Cutting planes
\end{keywords}

\begin{AMS}
90C10, 90C11, 90C57
\end{AMS}

\section{Introduction}
In this paper, we study the inequality description of sets of the form
\begin{align}\label{eq:cor-poly}
X(R,P) \quad &:= \quad \conv\left\{ (s,y)\in \mathbb{R}^k_+\times \mathbb{Z}^\ell_+\mid Rs+Py \in b+\mathbb{Z}^n \right\}
\end{align}
where $n, k, \ell \in \N$, $R\in \mathbb{R}^{n\times k}, P\in\mathbb{R}^{n\times \ell}, b\in\mathbb{R}^n\setminus \mathbb{Z}^n$. Such sets have been the focus of intense study in the last decade, and are typically refereed to as {\em mixed-integer corner polyhedra} in the literature; see the surveys~\cite{delpia-4OR,basu2015geometric,basu2016light,basu2016light2} and~\cite[Chapter 6]{conforti2014integer}, and the references therein. One of the focal points in this recent activity has been the revival of the {\em cut generating function} approach, originally pioneered by Gomory and Johnson in their seminal work in the 1970s~\cite{infinite,infinite2,johnson}. The phrase ``cut generating function" was invented relatively recently by the authors of~\cite{conforti2014cut}.


\begin{definition}[Valid pair]\label{Def:ValidPair}
Fix $n\in \N$. A pair of real valued functions $\left ( \psi,\pi\right )$ on $\R^n$ are said to be a \emph{valid pair} if  
\begin{align}\label{eq:cut}
    \sum_{i=1}^k\psi(r_i)s_i + \sum_{i=1}^\ell \pi(p_i)y_i \quad&\geq\quad 1 
\end{align}
 is a valid inequality for $X(R,P)$ for all $k, \ell, R, P$, where $r_i$ and $p_i$ refer to the columns of $R$ and $P$ respectively. 
\end{definition}

The important thing to note is that a valid pair of functions only depends on the dimension $n$ and $b$, and should work for any matrices $R,P$ with $n$ rows, and an arbitrary number of columns. Gomory and Johnson made the discovery that not only do such valid pairs of functions exist, they give a unifying framework for many cut generating procedures extensively used in the integer programming community. {Gomory's original motivation~\cite{gom} was to choose $n$ rows from the optimal simplex tableaux of a general mixed-integer optimization problem and apply these cut generating functions (for this particular choice of $n$ rows of the tableaux) to obtain cutting planes for the original problem.} The modern trend has been to build a more computationally tractable viewpoint of this theory. This has been possible by drawing upon novel insights into cutting plane theory by Balas from the 1970s, which was termed by him as the theory of {\em intersection cuts}~\cite{bal}. We summarize this approach to cut generating functions next.

 Given a convex set $C$ with the origin in its interior, the {\em gauge function} is defined as
       $ \psi_C(x) :=\inf_{\lambda > 0 }\left\{ \lambda: \frac{x}{\lambda} \in C \right\}$.
	   Let $S$ be any closed subset of $\R^n\setminus\{0\}$ (not necessarily convex). A closed convex set $B$ containing $0$ in its interior is said to be an {\em $S$-free convex neighborhood of $0$} if $\intr(B) \cap S = \emptyset$. It is said to be a {\em maximal} $S$-free convex neighborhood of $0$ if it is not strictly contained in another $S$-free convex neighborhood of $0$. For brevity, we will often refer to such sets as (maximal) $S$-free convex sets. In this paper, we will be concerned with $S=b+ \Z^n$, where $b \in \R^n \setminus \Z^n$. The starting point of combining Balas' intersection cuts and Gomory-Johnson's cut generating function theory is the observation that setting $\psi = \pi = \psi_B$, where $B\subseteq \R^n$ is a maximal $b+\Z^n$ free set gives a valid pair. Thus, for every maximal $b+\Z^n$ free set $B\subseteq \R^n$, we obtain a valid inequality $ \sum_{i=1}^k\psi_B(r_i)s_i + \sum_{i=1}^\ell \psi_B(p_i)y_i \geq1 $ for $X(R,P)$, for all $k, \ell, R,P$. Such inequalities can be implemented in a cut generating procedure in any modern solver, as long as one has a way of computing $\psi_B(r)$ efficiently, for any $r\in \R^n$. Here, a new ingredient has been added by modern research, which uses a result of Lovasz~\cite{lovasz} (later refined by others) stating that all maximal $b+\Z^n$ free sets are polyhedra that can be written in the form $B:= \{x \in \R^n: a^i\cdot x \leq 1, \;\; i = 1, \ldots, m\}$, where $a^i \in \R^n$. It turns out that the gauge function of such a set is simply $\psi_B(r) = \max_{i=1}^m a^i\cdot r$. This now makes the computation of the coefficients of the cut $ \sum_{i=1}^k\psi_B(r_i)s_i + \sum_{i=1}^\ell \psi_B(p_i)y_i \geq1 $ more concrete, compared to the original theory of Gomory and Johnson.

The next ingredient in the modern approach to cut generating functions is to use an idea due to Balas and Jeroslow~\cite{baljer}, which they termed {\em monoidal strengthening}. In our context, the observation translates to the fact that one can improve the coefficients of the $y_i$ variables, by using the integrality constraint on these variables.

\begin{definition}[Trivial lifting]
    Let $b \in \R^n\setminus \Z^n$ and let $B$ be a maximal $(b+\Z^n)$-free convex set. The \emph{trivial lifting} of $\psi_B(x)$ is defined by 
\begin{align}\label{eq:trivial-lifting}
    \widetilde{\psi_B}(x) \quad&=\quad \min \left ( 1, \inf_{z\in \mathbb{Z}^n}\psi_B(x+z)\right )
\end{align}
\end{definition}

One of the main outcomes of the recent computational perspective on cut generating functions can be summarized as follows~\cite{dw,infinite}.

\begin{theorem}\label{thm:comp-CGF} Let $b \in \R^n\setminus \Z^n$ and let $B$ be a maximal $(b+\Z^n)$-free convex set. 
Then $(\psi_B, \widetilde{\psi_B})$ is a valid pair.
\end{theorem}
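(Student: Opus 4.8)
The plan is to start from the fact recalled in the text that $(\psi_B,\psi_B)$ is a valid pair for every maximal $(b+\Z^n)$-free convex set $B$, and to improve the second component in two independent steps: a \emph{monoidal strengthening} step replacing $\psi_B$ by $\pi'(x):=\inf_{z\in\Z^n}\psi_B(x+z)$, and a \emph{truncation} step replacing $\pi'$ by $\min(1,\pi')=\widetilde{\psi_B}$. I would use throughout that $\psi_B\ge 0$ (a gauge of a convex set with $0$ in its interior is nonnegative; equivalently $\psi_B(r)=\max\{0,\max_i a^i\cdot r\}$), hence $\pi'\ge 0$ and $0\le\widetilde{\psi_B}\le 1$. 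I would also use the elementary observation that, since the left-hand side of~\eqref{eq:cut} is linear in $(s,y)$ and its right-hand side is a constant, it is enough to verify the inequality at the generators $(s,y)\in\R^k_+\times\Z^\ell_+$ with $Rs+Py\in b+\Z^n$, rather than on all of $X(R,P)$.

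\emph{Monoidal step.} Fix such a generator $(s,y)$ and arbitrary $z_1,\dots,z_\ell\in\Z^n$, and let $P'$ be the matrix with columns $p_i+z_i$. Then $Rs+P'y = Rs+Py+\sum_{i=1}^\ell y_i z_i \in b+\Z^n$ since each $y_i\in\Z$, so $(s,y)$ is also a generator of $X(R,P')$. Because $(\psi_B,\psi_B)$ is a valid pair for \emph{every} matrix pair with $n$ rows, applying it to $(R,P')$ gives $\sum_i\psi_B(r_i)s_i+\sum_i\psi_B(p_i+z_i)y_i\ge 1$. As this holds for all $z_1,\dots,z_\ell$, and the multipliers $y_i$ are nonnegative, taking the infimum over $(z_1,\dots,z_\ell)$ distributes across the terms and yields $\sum_i\psi_B(r_i)s_i+\sum_i\pi'(p_i)y_i\ge 1$. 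Hence $(\psi_B,\pi')$ is a valid pair.

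\emph{Truncation step.} Here I would prove the general statement: if $(\psi,\pi)$ is a valid pair with $\psi\ge0$ and $\pi\ge0$, then $(\psi,\min(1,\pi))$ is a valid pair; applied to $(\psi_B,\pi')$ this gives the theorem. Fix a generator $(s,y)$. If for every $i$ we have $y_i=0$ or $\pi(p_i)\le1$, then $\min(1,\pi(p_i))\,y_i=\pi(p_i)\,y_i$ for every $i$ and the inequality follows from validity of $(\psi,\pi)$. Otherwise there is an index $i_0$ with $\pi(p_{i_0})>1$ and $y_{i_0}>0$; integrality of $y$ forces $y_{i_0}\ge1$, so $\min(1,\pi(p_{i_0}))\,y_{i_0}=y_{i_0}\ge1$, while every remaining term $\psi(r_i)s_i\ge0$ and $\min(1,\pi(p_i))\,y_i\ge0$, so the left-hand side is at least $1$.

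I do not expect a serious obstacle: the statement follows from two classical ingredients (Balas--Jeroslow monoidal strengthening and truncation at $1$). The only delicate points are the interchange of the infimum with the finite sum in the monoidal step, which is legitimate precisely because the $y_i$ are nonnegative and $\psi_B\ge0$ keeps each $\pi'(p_i)$ finite, and the use of integrality of $y$ in the truncation step, without which a fractional $y_{i_0}$ could leave $\min(1,\pi(p_{i_0}))\,y_{i_0}<1$. Note that neither boundedness of $B$ nor its maximality is needed for validity; maximality only matters for the separate question of how strong the resulting inequalities are.
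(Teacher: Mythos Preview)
Your proof is correct. Note, however, that the paper does not actually prove this theorem: it is stated with citations to~\cite{dw,infinite} as a known result summarizing earlier work, so there is no ``paper's own proof'' to compare against. Your argument---monoidal strengthening of the second component via the integer-shift trick of Balas--Jeroslow, followed by truncation at~$1$ using integrality and nonnegativity of~$y$---is precisely the standard route found in those references. Your closing remark that maximality of~$B$ is not used for validity (only for strength) is also accurate.
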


It is important to note that given a maximal $b+\Z^n$ free set $B$, there may exist several functions $\pi: \R^n \to \R$ such that $(\psi_B, \pi)$ is a valid pair; all such functions $\pi$ are called {\em liftings} of $\psi_B$. The trivial lifting is only one such function. Since the variables $y$ are nonnegative, if we have two liftings $\pi_1 \leq \pi_2$, then the cutting plane~\eqref{eq:cut} derived from $\pi_2$ is dominated by the one derived from $\pi_1$. Thus, ideally, one would like to work with {\em minimal liftings}, i.e., liftings $\pi$ such that there does not exist a different lifting $\pi'\neq \pi$ with $\pi'\leq \pi$.  In general, the trivial lifting may not be minimal; characterizing situations when it is indeed minimal has received a lot of attention~\cite{averkov2015lifting,basu2012unique,basu-paat-lifting,dw,dw2,bcccz,ccz}. {In fact, the trivial lifting is always an upper bound on any minimal lifting, i.e., $\pi \leq \widetilde\psi$ for any minimal lifting $\pi$ of $\psi$. Thus, when the trivial lifting is minimal, it is the unique minimal lifting.}
\bigskip

In our opinion, there are two key obstacles to implementing such cut generating functions in state-of-the-art software: \begin{enumerate}  \item There are too many (in fact, infinitely many) maximal $b+\Z^n$ free sets to choose from. This is the problem of {\em cut selection}.\item For maximal $b+\Z^n$ free polyhedra with complicated combinatorial structure, the computation of the trivial lifting via~\eqref{eq:trivial-lifting} is extremely challenging. Moreover, computing the values of  minimal liftings, especially if the trivial lifting is not the unique minimal lifting is even more elusive, with no formulas like~\eqref{eq:trivial-lifting} available.\end{enumerate} Thus, a central question in making cut generating function theory computationally viable, which also motivates the title of this paper, is the following. 

\begin{quote} \begin{question}\label{quest:find-subset} Find a ``simple" subset of maximal $b+\Z^n$ free polyhedra such that two goals are simultaneously achieved: 
\begin{itemize} \item[(i)] provide guarantees that this ``simple'' subset of $b+\Z^n$ free sets gives a good approximation of the closure obtained by throwing in cuts from all possible maximal $b+\Z^n$ free sets, and \item[(ii)] cutting planes like~\eqref{eq:cut} can be derived from them with relatively light computational overhead, either via trivial liftings or other lifting techniques. \end{itemize} \end{question} 
\end{quote}


\subsection{Summary of results} The goal of this paper is to make some progress in \cref{quest:find-subset}. 
{In our opinion, these results provide both theoretical evidence for the utility of cut generating functions and algorithms that are efficient enough be implemented in practice.} 
\begin{enumerate}
    \item\label{result-1} One may wonder if the trivial lifting function of the gauge can approximate any minimal lifting up to some factor. We show that there exist maximal $b+\Z^n$ free sets whose gauge functions have minimal liftings that are arbitrarily better than the trivial lifting (on some subset of vectors) {[recall that any minimal lifting is pointwise smaller than the trivial lifting]}. 
 More formally, we establish 
    \begin{theorem}\label{thm:bad-approx}
    Let $n$ be any natural number and $\epsilon > 0$. There exists $b \in \R^n\setminus \Z^n$ and a family $\mathcal{F}$ of maximal $(b+\Z^n)$-free sets such that for any $B\in \mathcal{F}$, there exists a minimal lifting $\pi$ of $\psi_B$ and $p \in \R^n$  satisfying $\frac{\pi(p)}{\widetilde{\psi_B}(p)} < \epsilon$.
    \end{theorem}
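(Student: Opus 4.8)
The plan is to (1) reduce the theorem to producing a single ``bad'' example, (2) reduce the ambient dimension to a small base dimension, and (3) build the base example from a degenerating family of maximal lattice-free polyhedra. For (1), I would combine the quoted fact that any minimal lifting $\pi$ of $\psi_B$ satisfies $\pi\le\widetilde{\psi_B}$ pointwise with the observation that below \emph{any} lifting $\pi^*$ of $\psi_B$ there is a minimal one: the validity condition in \cref{Def:ValidPair} only ever constrains finitely many columns at a time, so the pointwise infimum of a decreasing chain of liftings is again a lifting, and Zorn's lemma applies. Hence it suffices to produce, for each $n$ and $\epsilon>0$, a $b\in\R^n\setminus\Z^n$, a maximal $(b+\Z^n)$-free set $B$, a point $p$ with $\widetilde{\psi_B}(p)>0$, and \emph{some} lifting $\pi^*$ of $\psi_B$ with $\pi^*(p)<\epsilon\,\widetilde{\psi_B}(p)$: a minimal lifting $\pi\le\pi^*$ then satisfies $\pi(p)\le\pi^*(p)<\epsilon\,\widetilde{\psi_B}(p)$, and $\cF$ may be taken to be the collection of all such $B$ produced as a parameter is driven into the appropriate range.

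For (2): if $B_0\subseteq\R^{n_0}$ is maximal $(b_0+\Z^{n_0})$-free and $b=(b_0,\0)\in\R^n$, then $B:=B_0\times\R^{\,n-n_0}$ is maximal $(b+\Z^n)$-free (a short argument: any $(b+\Z^n)$-free convex set containing $B$ contains the lines $\{\0\}\times\R^{n-n_0}$ through each of its points, hence has that subspace in its lineality, hence factors as $B_0'\times\R^{n-n_0}$ with $B_0'\supseteq B_0$ still $(b_0+\Z^{n_0})$-free, forcing $B_0'=B_0$). Its gauge and trivial lifting ignore the last $n-n_0$ coordinates, and if $(\psi_{B_0},\pi_0)$ is a valid pair in $\R^{n_0}$ then so is the coordinate-extension $(\psi_B,\pi_0\circ\proj)$ in $\R^n$, since the first $n_0$ rows of any feasible system for $X(R,P)$ are a feasible system for the $n_0$-dimensional corner set. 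So it is enough to exhibit the example in a fixed small dimension $n_0$ (taking $\cF=\emptyset$, which vacuously satisfies the statement, in the residual small dimensions where one has no genuine example).

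For (3): in dimension $n_0$ I would take a parameterized family $\{B_t\}$ of maximal lattice-free polyhedra that degenerates --- one facet held fixed while $B_t$ becomes increasingly skewed/elongated in a primitive lattice direction $d$ --- together with a point $p$ sitting ``far out'' along $d$ (near an extreme vertex, or in the thin tail/near the recession cone) chosen so that every translate $p+z$, $z\in\Z^{n_0}$, avoids $\intr B_t$, whence $\widetilde{\psi_{B_t}}(p)=1$ for all $t$. Writing $B_t=\{x:a^i_t\cdot x\le 1\}$ makes $\psi_{B_t}=\max_i a^i_t\cdot x$ explicit, so $\widetilde{\psi_{B_t}}(p)$ and the candidate $\pi^*_t(p)$ are concrete to estimate. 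As $t$ grows, lattice points proliferate on $\bd B_t$ along $d$; monoidal strengthening tailored to these points --- in favorable cases a projection/``fill-in'' lifting associated to the lattice line through $d$ --- yields a lifting $\pi^*_t$ whose value at $p$ is governed by a lower-dimensional lattice-free body that is becoming relatively ``wide'', so $\pi^*_t(p)\to 0$ while $\widetilde{\psi_{B_t}}(p)\equiv 1$; picking $t$ large gives the desired ratio $<\epsilon$. For $n_0$ one can attempt $n_0=2$ (families of maximal lattice-free triangles or quadrilaterals) or, since planar maximal lattice-free bodies are size-constrained, $n_0=3$, where unbounded non-split maximal lattice-free polyhedra leave more room.

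The main obstacle is the built-in tension in step (3): $p$ must be ``deep'' enough that no lattice translate re-enters $\intr B_t$, so that $\widetilde{\psi_{B_t}}(p)$ stays pinned at $1$, yet ``exploitable'' enough --- outside the lifting region of $\psi_{B_t}$ and aligned with the degenerating direction --- that a strictly better lifting can actually reach it; reconciling these two demands is what pins down the precise shape of $B_t$ and the precise location of $p$. The second hard step is verifying that the improved function $\pi^*_t$ genuinely is a lifting, i.e. that $(\psi_{B_t},\pi^*_t)$ is valid for \emph{all} $R,P$; this requires either the structure theory relating liftings of $\psi_B$ to lattice-free sets in quotient lattices, or a careful direct monoidal-strengthening argument, and is where the real work lies --- the remaining steps are routine bookkeeping with gauges and lattice translates.
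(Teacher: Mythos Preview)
Your overall strategy matches the paper's: build a degenerating family in a small base dimension, pick a fixed point $p$ that lies outside every lattice translate of each member (forcing $\widetilde{\psi_{B_t}}(p)=1$), certify that some lifting takes an arbitrarily small value at $p$, then pass to general $n$ via cylinders. The paper carries this out with $b=(-\tfrac12,-\tfrac12)$ and an explicit one-parameter family of Type~3 maximal lattice-free triangles (one side fixed, the other two tilted by $\alpha_i=1+\tfrac1i$, $\beta_i=\tfrac1i$), together with the fixed point $p=(\tfrac14,0)$. So $n_0=2$ suffices and your concern about needing $n_0=3$ is unwarranted; the extension to $n\ge 2$ is exactly your cylinder argument (the paper also mentions the coproduct as an alternative).

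The real divergence is in how step~(3) is closed. You propose building an explicit improved lifting $\pi^*_t$ via ``monoidal strengthening'' or a ``fill-in'' construction and then verifying validity directly --- and you correctly flag this as the hard part. The paper bypasses this entirely by invoking the known formula (from~\cite{dw,fixing-region})
\[
\pi_{\min}(p)\;=\;\sup_{N\in\N}\ \frac{1-\widetilde{\psi_B}(b-Np)}{N}
\]
for the smallest value any lifting of $\psi_B$ can take at $p$, together with the fact that this value is realized by some minimal lifting. With this in hand, the proof collapses to a concrete estimate: the points $b-Np$ form a horizontal arithmetic progression, and for the chosen triangles each such point lies arbitrarily close to a side of some translate $T_i+z$, so $\widetilde{\psi_{T_i}}(b-Np)\ge 1-\epsilon_i$ with $\epsilon_i\to 0$, forcing $\pi_{\min}(p)\le\epsilon_i\to 0$. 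No Zorn argument and no hand-built $\pi^*_t$ are needed. This formula is the missing ingredient in your sketch; note also that ``monoidal strengthening'' in the Balas--Jeroslow sense \emph{is} the trivial lifting, so that phrase does not by itself name a mechanism that beats $\widetilde{\psi_B}$.
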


\item\label{result-2} 
Given an arbitrary maximal $b+\Z^n$ free set $B$, computing the trivial lifting using~\eqref{eq:trivial-lifting} can be computationally very hard because it is equivalent to the notorious closest lattice vector problem in the algorithmic geometry of numbers literature~\cite{Eisenbrand2010}. One could potentially write an integer linear program to solve it, but this somewhat defeats the purpose of cut generating functions: one would like to compute the coefficients much faster than solving complicated optimization problems like~\eqref{eq:trivial-lifting} (and even harder IPs for general lifting). To overcome this issue, we isolate a particular family of maximal $b+\Z^n$ free sets that we call {\em generalized cross-polyhedra} (see \cref{Def:gen-cross-poly} for a precise definition) and give an algorithm for computing the trivial lifting function for any member of this family without using a high dimensional integer linear program. For this family, one needs $O(2^n)$ time to compute the gauge function because the $b+\Z^n$ free sets have $2^n$ facets, and one needs an additional $O(n2^n)$ time to compute the trivial lifting coefficient. {\em Recall that $n$ corresponds to the number of rows used to generate the cuts}. This is much better complexity compared to solving~\eqref{eq:trivial-lifting} using an integer program or a closest lattice vector (the latter will have to deal with an asymmetric, polyhedral gauge which is challenging). This is described in \cref{sec:Algorithm_for_trivial_liftings}; see \cref{alg:CoProdLift}. For a subfamily of generalized cross-polyhedra, both of these computations (gauge values and trivial lifting values) can actually be done in $O(n)$ time, which we exploit in our computational tests (see \cref{sec:cut-generation}). We envision using this in software and computations in the regime $n \leq 15$. {\em To the best of our knowledge, no previous work provides a comparable lifting procedure that can be easily coded in software and that works for any number of rows $n$, even for a restricted class of $b+\Z^n$ free sets. Previous work on lifting that can be readily translated to code, without solving an intermediate IP, has focused on the $n=1, 2$ case (see the relevant literature discussed below).}
     
    \item\label{result-3} From a theoretical perspective, we also show that our family of generalized cross-polyhedra can provide a finite approximation for the closure of cutting planes of the form 
        $$\sum_{i=1}^k\psi_B(r_i)s_i + \sum_{i=1}^\ell \widetilde{\psi_B}(p_i)y_i \geq 1.$$ More precisely, for any matrices $R \in \R^{n \times k}, P \in \R^{n\times \ell}$, and any maximal $b+\Z^n$ free set $B$, let $H_{B}(R,P) := \{(s,y) : \sum_{i=1}^k\psi_B(r_i)s_i + \sum_{i=1}^\ell \widetilde{\psi_B}(p_i)y_i \ge 1\}$. Let $\mathcal{G}_b$ denote the set of all generalized cross-polyhedra (as applicable to $S = b + \Z^n$). 
       Then, we have 
    
    \begin{theorem}\label{thm:approx}
   Let $n\in \N$ and $b \in \Q^n\setminus \Z^n$. Define for any matrices $R, P$
   \begin{align*}
   M(R,P) &:= \cap_{B \textrm{ maximal $b+\Z^n$ free set}} H_B(R,P)\\
   G(R,P) &:= \cap_{B \in \mathcal{G}_b} H_B(R,P)	
   \end{align*}   
   Then there exists a constant $\alpha$ depending only on $n, b$ such that $M(R,P) \subseteq G(R,P) \subseteq \alpha M(R,P)$ for all matrices $R,P$.
   \end{theorem}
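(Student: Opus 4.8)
The inclusion $M(R,P)\subseteq G(R,P)$ is immediate, since every generalized cross-polyhedron is by construction a maximal $b+\Z^n$ free set, so $\mathcal{G}_b$ indexes a subfamily of the sets defining $M(R,P)$, and shrinking the index set of an intersection only enlarges it. All the work is in the reverse inclusion, and I would first reduce it to a pointwise statement about gauges. Fix a maximal $b+\Z^n$ free set $B$ with $\0\in\intr B$. I claim it suffices to exhibit a constant $\kappa=\kappa(n,b)\geq 1$ and, for each such $B$, finitely many $C_1,\dots,C_N\in\mathcal{G}_b$ with
\[
\psi_B(r)\ \geq\ \frac{1}{\kappa N}\sum_{j=1}^{N}\psi_{C_j}(r)\qquad\text{for every }r\in\R^n .
\]
Because $t\mapsto\min(1,t)$ satisfies $\min(1,\tfrac1\kappa t)\geq\tfrac1\kappa\min(1,t)$ for $\kappa\geq 1$ and is compatible with averaging and truncation, the same inequality (with the same $\kappa N$) passes to the trivial liftings, $\widetilde{\psi_B}\geq\frac1{\kappa N}\sum_j\widetilde{\psi_{C_j}}$. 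Substituting the columns $r_i,p_i$ and using $s_i,y_i\geq0$ (the regime relevant to $X(R,P)$) then yields $\sum_i\psi_B(r_i)s_i+\sum_i\widetilde{\psi_B}(p_i)y_i\geq\frac1\kappa\cdot\frac1N\sum_{j=1}^N\bigl(\sum_i\psi_{C_j}(r_i)s_i+\sum_i\widetilde{\psi_{C_j}}(p_i)y_i\bigr)\geq\frac1\kappa$ for any $(s,y)\in G(R,P)$, i.e.\ $\kappa(s,y)\in M(R,P)$. Hence $G(R,P)\subseteq\frac1\kappa M(R,P)$, and $\alpha:=\tfrac1\kappa$ works. (A single $C$ with $\psi_B\geq\tfrac1\kappa\psi_C$ would suffice; the averaged form is a hedge in case no single cross-polyhedron covers $B$.)

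The geometric heart is the displayed gauge inequality. Since $\psi_B\geq\tfrac1\kappa\psi_C\iff\tfrac1\kappa B\subseteq C$, what must be shown is that a uniformly bounded shrinkage of every maximal $b+\Z^n$ free set fits inside a member of $\mathcal{G}_b$ (or, in the hedged version, inside a set whose gauge is the average of a few). I would first dispose of the lineality space $L_B$ of $B$: it is rational (otherwise $\intr B$, being $L_B$-invariant, would have to avoid the dense set $(b+\Z^n)+L_B$), so $B=B_0+L_B$ where $B_0$ is a \emph{bounded} maximal lattice-free set in a complement of $L_B$ with respect to the projected lattice $\Z^n/(\Z^n\cap L_B)$ and its induced coset; and any $C\supseteq\tfrac1\kappa B$ must also contain $L_B$, so one should look for $C=C_0+L_B$ with $C_0$ a lower-dimensional cross-polytope containing $\tfrac1\kappa B_0$. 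This reduces everything to bounded maximal lattice-free sets in dimension $n'\leq n$, provided $\mathcal{G}_b$ contains enough such cylinders in enough positions and orientations --- which is precisely what the constructive parametrization of $\mathcal{G}_b$ (\cref{Def:gen-cross-poly} and the surrounding construction) is expected to supply.

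For a bounded maximal lattice-free $B_0$ I would invoke the flatness theorem: a lattice-free body has lattice width $O(n')$, and a \emph{maximal} one (all facets blocked by lattice points) is then sandwiched between two balls of radii $\delta(n',b)>0$ and $\rho(n',b)<\infty$, with facet normals uniformly bounded away from $0$ and $\infty$. Choosing $n'$ of these normals and the corresponding cross-polyhedron, positioned so as to lie in $\mathcal{G}_b$, the estimate $|a^i\cdot r|\leq\tfrac{\rho}{\delta}\,\psi_{B_0}(r)$ yields $\psi_C\leq c(n',b)\,\psi_{B_0}$ up to the loss incurred when passing to an actual member of $\mathcal{G}_b$; so $\kappa$ can be taken of order $n\rho/\delta$ times that loss. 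Uniformity of $\kappa$ over \emph{all} $B$ is where $b\in\Q^n$ enters: after the lineality reduction the normalized maximal $b+\Z^n$ free sets fall into finitely many families, indexed by $n'\leq n$ and the finitely many possible quotient cosets (here one uses that $b$ has bounded denominator), each of which is precompact in the Hausdorff metric; since $B\mapsto\kappa_B$ is bounded on each, $\kappa:=\sup_B\kappa_B$ is finite and depends only on $n$ and $b$.

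The step I expect to be the real obstacle is the geometric covering claim together with its uniformity: showing that the explicitly parametrized family $\mathcal{G}_b$ is rich enough that a boundedly-shrunk copy of \emph{every} maximal $b+\Z^n$ free set --- across all dimensions $n'\leq n$, all admissible rational lineality spaces, and all positions of $\0$ inside the set (including those where $\0$ is close to the boundary, which make $\psi_B$ highly anisotropic) --- actually sits inside some member of $\mathcal{G}_b$ with a loss factor controlled independently of $B$, $R$, $P$. Carrying this out forces one to use the concrete description of $\mathcal{G}_b$ developed earlier in the paper, and it is here that the rationality of $b$ is genuinely needed, since that is what turns the relevant collection of shapes, modulo the lattice symmetries, into a finite union of compact families.
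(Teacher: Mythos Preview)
Your reduction of $G(R,P)\subseteq\alpha M(R,P)$ to a gauge inequality---for every maximal $b+\Z^n$ free $L$ find some $C\in\mathcal{G}_b$ with $\frac1\kappa L\subseteq C$, $\kappa=\kappa(n,b)$---is exactly what the paper does (a single $C$ suffices, so your averaged hedge is unnecessary), and your passage to the trivial lifting via $\min(1,\tfrac1\kappa t)\geq\tfrac1\kappa\min(1,t)$ is correct.

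The gap is in how you propose to obtain the gauge inequality. Your assertion that a bounded maximal lattice-free body is ``sandwiched between two balls of radii $\delta(n',b)>0$ and $\rho(n',b)<\infty$'' depending only on $n',b$ is false: already for $b=(-\tfrac12,-\tfrac12)$ in $\R^2$ there are maximal $b+\Z^2$ free (Type~3) triangles of arbitrarily large aspect ratio (the paper exhibits exactly such a family in Section~\ref{sec:bad-approx}), so the collection is \emph{not} precompact modulo lattice symmetries and your compactness argument for uniformity of $\kappa$ cannot close. The flatness theorem gives thinness only in \emph{one} lattice direction, not a uniform diameter bound. Moreover, a generalized cross-polyhedron has $2^{n'}$ facets arranged in the specific recursive pattern of Definition~\ref{Def:gen-cross-poly}; ``choosing $n'$ of these normals and the corresponding cross-polyhedron'' does not produce a member of $\mathcal{G}_b$. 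The paper bypasses all of this with a constructive two-stage induction on $n$ (Theorem~\ref{thm:gcp-main} via Lemma~\ref{lem:set-with-simplex}): use flatness to reduce the dimension, inductively approximate $L$ by a $b+\Z^n$ free \emph{generalized simplex} with a facet parallel to a coordinate hyperplane and height at most $1$, then feed that simplex into the coproduct recursion of Definition~\ref{Def:gen-cross-poly} to manufacture the required $C\in\mathcal{G}_b$. This yields the explicit constant $\kappa=(s\,4^{n-1}\Flt(n))^{n-1}$, with the rationality of $b$ entering only through the denominator $s$ in the width argument, not through any finiteness or compactness.
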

   
   Note that since $\psi_B, \widetilde\psi_B \geq 0$, both $M(R,P)$ and $G(R,P)$ in \cref{thm:approx} are polyhedra of the blocking type, i.e., they are contained in the nonnegative orthant and have their recession cone is the nonnegative orthant. Thus, the relationship $G(R,P) \subseteq \alpha M(R,P)$ shows that one can ``blow up" the closure $M(R,P)$ by a factor of $\alpha$ and contain $G(R,P)$. Equivalently, if we optimize any linear function over $G(R,P)$, the value will be an $\alpha$ approximation compared to optimizing the same linear function over $M(R,P)$. 

   \item\label{result-4} We test our family of cutting planes on randomly generated mixed-integer linear programs, on vertex cover and stable set problems in random graphs, and on the MIPLIB 3.0 set of problems. The short summary is that we seem to observe a tangible improvement with our cuts on the general random instances, {\em no improvement whatsoever} in the random graph instances, and no significant improvement on structured problems like MIPLIB 3.0 problems (except for a specific family). The random data set consists of approx. 13000 instances, and our observed improvement cannot be explained by random noise. More details are available in \cref{sec:CompExpr}.
   
{Our conclusion is that while the family of generalized cross polyhedra has a closure with good properties (like Theorem~\ref{thm:approx} above) and any particular cut from the family can be generated with light computational overhead (point 2. above), the {\em cut selection problem} is overwhelming even for this specialized family. We used a very naive random sampling method for selecting cuts from this family and clearly this heuristic is not good enough, as our computational results show. Our efforts at approximating the closure did not report anything different (see discussion in Section~\ref{sec:approximate-closure}). }

{The one encouraging message we draw from our computational experience is that in the general random instances distinct gain was observed in a non-trivial fraction (about 10\%). Perhaps this suggests that the cuts are able to exploit some structure in dense MIP problems. But what this structure could be is not very clear.}
   \end{enumerate}%
   

\subsection{Discussion}    {We isolate a {\em parametrizable} family of $b+ \Z^n$ free sets such that the cut generating functions derived from them are simultaneously ``good" in the sense that their closure provides a good approximation to the closure of cuts obtained from all $b+\Z^n$ free sets, and ``efficient" in the sense that the cut coefficients can be computed in a few lines of computer code. We are unaware of a similar result on cut generating functions from the literature (we do a more detailed literature review below). }

   {While there are results in prior literature (discussed in the next subsection) that show the existence of ``good'' families in the sense of approximations, one potential concern with these families is the following. It seems impossible to give a ``nice" parametrization of these families from [3] that can be exploited computationally. In contrast, the family we propose in this manuscript can be parametrized very cleanly by tuples of the form $(\gamma,\mu,U)$ where $\gamma \in \R^n, \mu\in \Delta^{n-1}$ ($\Delta^{n-1}$ is the standard simplex in $\R^n$) and $U \in \R^{n\times n}$ is a unimodular matrix ($n$ refers to the chosen number of rows from the simplex tableaux on which the analysis is being done). }

   {Moreover, the problem of actually computing the cut coefficients is highly non-trivial for these ``good" families from the literature (involving closest lattice vector problems, as discussed in point 2. above). The only family of sets in previous literature where more efficient algorithms exist to compute \emph{any} lifting is the family of 2-dimensional $b+\Z^n$ free convex sets and even there, it is ironically quite non-trivial to compute the trivial lifting~\cite{fukasawa2016not}. But for the ``good" family we propose above, even in arbitrary dimensions, we give an efficient algorithm to compute the trivial lifting (which also happens to be the unique minimal lifting).}

{We view the computation section as a proof-of-concept to illustrate that each step mentioned in the paper --- constructing the generalized crosspolyhedra, computing their gauge and computing the trivial trival lifting --- is constructive and hence implementable. That said, we have not been able to address the {\em cut selection problem} adequately in practice. Our family is still ``too big" in spite of being ``efficient" in the sense described above, and our heuristics for selecting cutting planes from this family were unable to provide the theoretical gains promised by the closure. We view the results in this paper as making some partial progress towards answering Question~\ref{quest:find-subset}. There is no doubt that more advances are needed towards settling this question in a completely satisfying manner.}

\subsection{Related literature and discussion} It would be hard to list the numerous papers that have appeared in the last decade pertaining to cut generating functions. We refer to the reader to the recent surveys \cite{delpia-4OR,basu2015geometric,basu2016light,basu2016light2} and~\cite[Chapter 6]{conforti2014integer}, and the references therein. There are some papers worth singling out as they relate more directly to the flavor of questions we investigate in this paper. 

In~\cite{fukasawa2016intersection,fukasawa2016not}, the authors are explicitly concerned with computing the trivial lifting formula~\eqref{eq:trivial-lifting}, without solving an integer linear program. In fact, our result outlined in \cref{result-2} above is very much inspired by ideas from~\cite{fukasawa2016not}. This, to the best of our knowledge, summarizes the most directly comparable literature on the {\em efficiency} aspect of cut generating functions. There also has been parallel work on the {\em goodness} aspect. The papers~\cite{bbcm,DBLP:journals/siamjo/AndersenWW09,basu2012intersection,campelo2009stable,cornuejols2012tight,he2011probabilistic,del2011probabilistic,basu2011probabilistic,del2012rank,averkov2017approximation} provide results that, from a rigorous mathematical perspective, either show that a certain subset of cut generating functions forms a good approximation, or some natural subset (like split cuts) forms a bad approximation in the worst case. 

In general, testing of cut generating functions computationally, with and without the trivial lifting, has been done in~\cite{lp,poirrier2012multi,louveaux2015strength,basu2011experiments,dey2010experiments,espinoza2010computing}. Perhaps the best summary of these investigations is a quote from Conforti, Cornu\'ejols and Zambelli~\cite{corner_survey}: ``Overall, the jury is still out on the practical usefulness of [cut generating functions]" (the part in brackets is our paraphrasing of the original quote). Nevertheless, it is our firm belief that this only indicates further investigations with a computational perspective are needed in this area. We hope the results of this paper can guide this research. While our computational experience adds to the ambiguity of whether these new cutting plans are useful in practice, it is heartening (at least to us) to see the appreciable advantage observed in random instances. Moreover, some of the positive results reported in~\cite{espinoza2010computing} came from using special cases of our construction of generalized cross-polyhedra.

\subsection{Outline} The remainder of the paper is dedicated to rigorously establishing the above results. \Cref{sec:gcp-approx} formally introduces the class of generalized cross-polyhedra and \cref{thm:approx} is proved. \Cref{sec:Algorithm_for_trivial_liftings} then gives an algorithm for computing the trivial lifting for the family of generalized cross-polyhedra, which avoid solving integer linear programming problems or closest lattice vector problems for this purpose. \Cref{sec:CompExpr} gives the details of our computational testing. \Cref{sec:bad-approx} proves \cref{thm:bad-approx}.

\section{Approximation by Generalized Cross Polyhedra}\label{sec:gcp-approx}


\begin{definition}\label{Def:gen-cross-poly}[Generalized cross-polyhedra] We define the family of {\bf generalized cross-polytopes} recursively. For $n=1$, a generalized cross-polytope is simply any interval $I_a:= [a,a+1]$, where $a \in \Z$. For $n \geq 2$, we consider any generalized cross-polytope $B \subseteq \R^{n-1}$, a point $c \in B$, $\gamma \in \R$, and $\mu \in (0,1)$. A generalized cross-polytope in $\R^n$ built out of $B, c, \gamma, \mu$ is defined as the convex hull of $\left (\frac{1}{\mu}(B-c) + c\right ) \times \{\gamma\}$ and $\{c\} \times \left (\frac{1}{1-\mu}(I_{\lfloor \gamma\rfloor} - \gamma) + \gamma\right )$. The point $(c, \gamma) \in \R^n$ is called the {\bf center} of the generalized cross-polytope.

\begin{figure}[t]
	\captionsetup{justification=centering}
    \captionsetup[subfigure]{justification=centering}
	\centering
	\begin{subfigure}[b]{0.45\textwidth}
		\includegraphics[width=\textwidth]{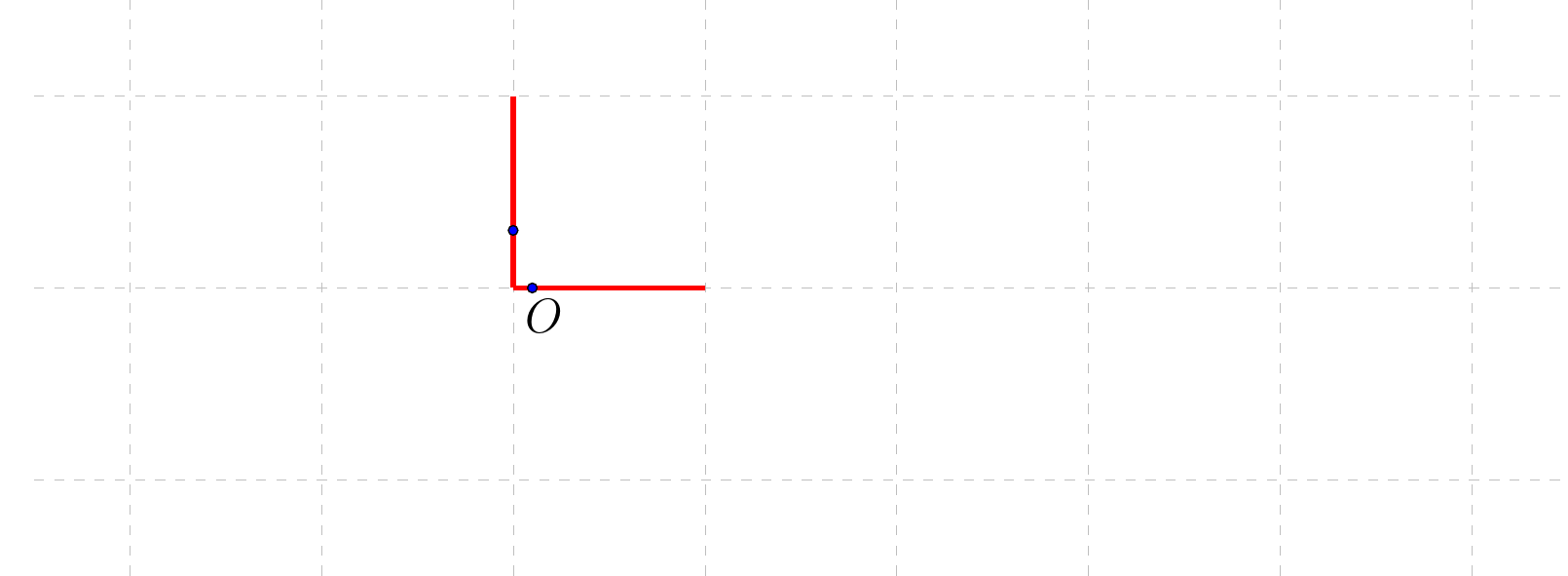}
		\caption{The horizontal red line is the crosspolytope $B$ and the vertical red line represents the interval $I_{\floor{\gamma}}$. The points on $B$ and $I_{\floor{\gamma}}$ are $c$ and $\gamma$ respectively.}\label{fig:CrossPoly1}
	\end{subfigure}\hfill
	\begin{subfigure}[b]{0.45\textwidth}
		\includegraphics[width=\textwidth]{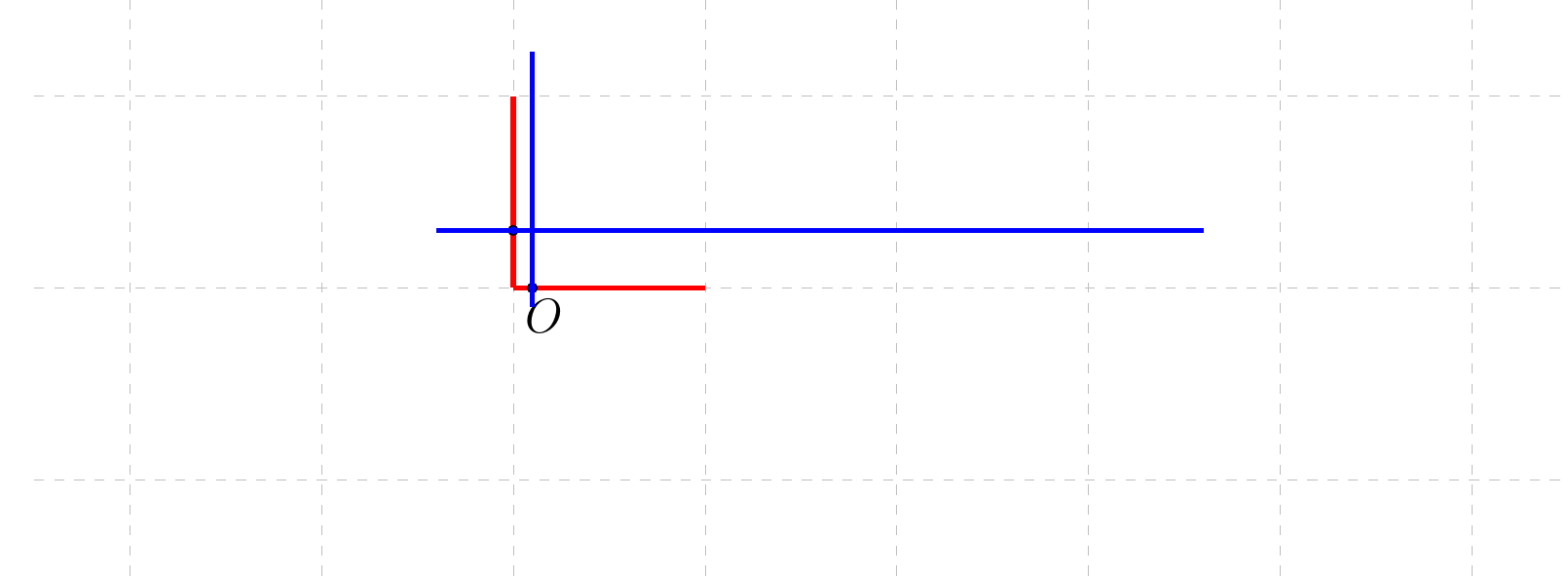}
		\caption{With $\mu = 0.25$, the horizontal blue line is $\left( \frac{1}{\mu}(B-c)+c \right) \times \left\{ \gamma \right\}$ and the vertical line is $\left\{ c \right\} \times \left( \frac{1}{1-\mu}\left( I_{\floor{\gamma}}-\gamma \right)+\gamma \right)$.}\label{fig:CrossPoly2}
	\end{subfigure}
	\begin{subfigure}[b]{0.45\textwidth}
		\includegraphics[width=\textwidth]{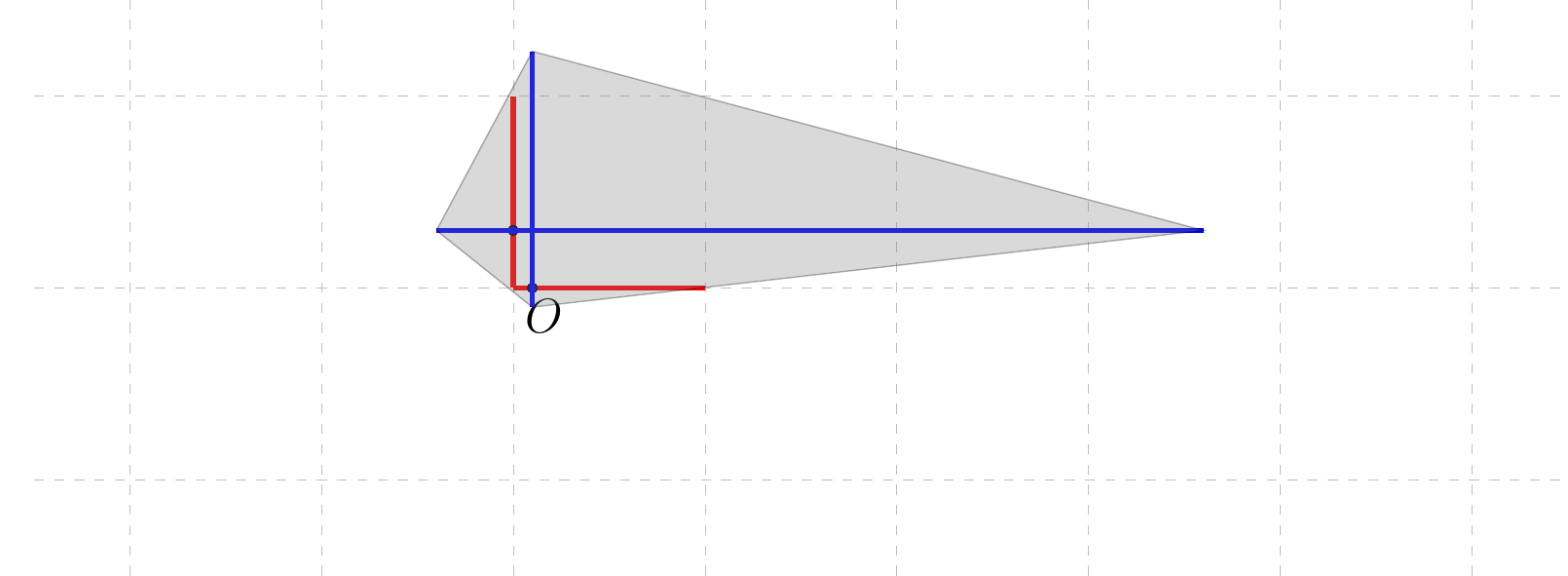}
		\caption{The convex hull of the sets in \cref{fig:CrossPoly2} gives $G$, the generalized cross-polytope.}\label{fig:CrossPoly3}
	\end{subfigure}\hfill
	\begin{subfigure}[b]{0.45\textwidth}
		\includegraphics[width=\textwidth]{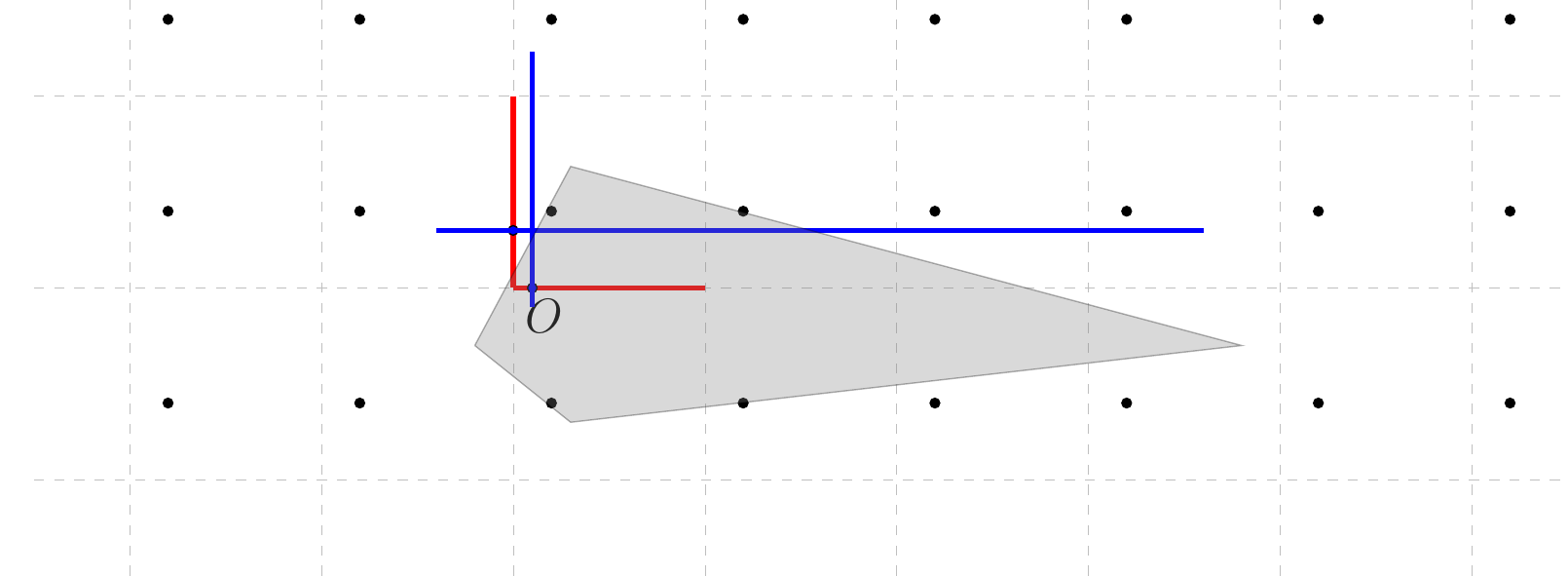}
		\caption{$b+G$ is the new $b+\Z^n$ free generalized cross-polytope.}\label{fig:CrossPoly4}
	\end{subfigure}
	\caption{Cross-polytope construction - The points $b+\Z^n$ are shown as black dots and the points in $\Z^n$ are the intersection of the dotted grid.}\label{fig:CrossPoly}
\end{figure}

A {\bf generalized cross-polyhedron} is any set of the form $X \times \R^{n-m}$, where {$m<n$} and  $X \subseteq \R^{m}$ is a generalized cross-polytope in $\R^m$. 
\end{definition}

The following theorem collects important facts about generalized cross-polyhedra that were established in~\cite{averkov2015lifting,basu-paat-lifting} (where these sets were first defined and studied) and will be important for us below. 

\begin{theorem}\label{thm:gcp-facts} Let $G\subseteq \R^n$ be a generalized cross-polyhedron. The following are all true.
\begin{itemize}
	\item[(i)] Let $b \in \R^n\setminus \Z^n$ such that $-b \in \intr(G)$. 
	Then $b+G$ is a maximal $b+\Z^n$ free convex set.	Moreover, using the values of $c, \gamma$ and $\mu$ in the recursive construction, one can find normal vectors $a^1, \ldots, a^{2^n} \in \R^n$ such that $b + G = \{x \in \R^n : a^i\cdot x \leq 1, \;\; i =1, \ldots, 2^n\}$.
\item[(ii)] If $G$ is a generalized cross-polytope, then there exists a unique $z\in \Z^n$ such that $z+[0,1]^n \subseteq G \subseteq \cup_{j=1}^n ((z+[0,1]^n) + \ell_j )$, {where $\ell_j$ is the line in $\R^n$ through the origin in the direction of the $j$-th unit vector.} Moreover, $z_j = \lfloor \gamma_j \rfloor$, where $\gamma_j$ is the value used in the $j$-th stage in the recursive construction of $G$ for $j=1, \ldots, n$ (for $j=1$, $\gamma_1$ is taken to be the left end point of the interval used to start the construction).
\end{itemize}
\end{theorem}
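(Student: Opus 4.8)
The plan is to prove the two items by induction on $n$, mirroring the recursive construction of generalized cross-polytopes in Definition~\ref{Def:gen-cross-poly}, and then handle the cylinder case $G = X \times \R^{n-m}$ by a direct reduction.

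For item (ii), I would start with the base case $n=1$: a generalized cross-polytope is $I_a = [a,a+1]$ with $a \in \Z$, so taking $z = a$ gives $z + [0,1] = I_a$ exactly, and the chain of containments is trivially an equality; also $z_1 = a = \lfloor \gamma_1 \rfloor$ with $\gamma_1$ the left endpoint, as prescribed. For the inductive step, suppose $G \subseteq \R^n$ is built from a generalized cross-polytope $B \subseteq \R^{n-1}$, a point $c \in B$, $\gamma \in \R$, and $\mu \in (0,1)$. By induction there is a unique $z' \in \Z^{n-1}$ with $z' + [0,1]^{n-1} \subseteq B \subseteq \bigcup_{j=1}^{n-1}\bigl((z'+[0,1]^{n-1}) + \ell_j\bigr)$ and $z'_j = \lfloor \gamma_j \rfloor$. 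I claim $z = (z', \lfloor\gamma\rfloor)$ works. The inner containment $z + [0,1]^n \subseteq G$: since $\mu, 1-\mu \in (0,1)$, the scalings $\frac{1}{\mu}$ and $\frac{1}{1-\mu}$ are expansions, so the two pieces whose convex hull defines $G$ contain $(z'+[0,1]^{n-1}) \times \{\gamma\}$-type and $\{c\}\times(I_{\lfloor\gamma\rfloor}\text{-expansion})$ slices respectively; a convexity argument (writing a point of $z+[0,1]^n$ as a convex combination along the $n$-th coordinate of its projections onto the two defining pieces, using that $c$ lies in the expansion of $B-c$ recentered and $\gamma$ lies in the expansion of $I_{\lfloor\gamma\rfloor}$) gives the inclusion. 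The outer containment: any point of $G$ is a convex combination $\lambda(x,\gamma) + (1-\lambda)(c,t)$ with $x \in \frac{1}{\mu}(B-c)+c$ and $t \in \frac{1}{1-\mu}(I_{\lfloor\gamma\rfloor}-\gamma)+\gamma$; one shows the first $n-1$ coordinates land in the union over $j \le n-1$ of $(z'+[0,1]^{n-1})+\ell_j$ lifted to $\R^n$, or the last coordinate lands in $\ell_n$-translate of $z+[0,1]^n$, by casing on whether $\lambda$ is large or small. Uniqueness of $z$ follows because $z + [0,1]^n$ has nonempty interior, so any two such $z$ would force overlapping unit cubes along coordinate lines, which pins down $z$.

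For item (i), given $b$ with $-b \in \intr(G)$, so that $0 \in \intr(b+G)$: I would first invoke the structural result (Lovász, as quoted in the introduction) that a maximal $b+\Z^n$ free set is a polyhedron, and instead directly verify that $b+G$ is $b+\Z^n$ free and maximal. The $b+\Z^n$ freeness: by item (ii), $\intr(G) \subseteq \bigcup_j \intr((z+[0,1]^n)+\ell_j)$ — wait, more carefully, $\intr(G)$ meets $\Z^n$ only possibly in the relative interiors of the cube faces, and one checks from the construction that $\intr(G) \cap \Z^n = \emptyset$ because the only lattice points in $G$ are the vertices of $z+[0,1]^n$ (which lie on the boundary); hence $\intr(b+G) \cap (b+\Z^n) = \emptyset$. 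Maximality: since $z+[0,1]^n \subseteq G$ and the $2^n$ vertices of $z + [0,1]^n$ all lie on $\bd(G)$, the set $b+G$ has $2^n$ lattice points of $b+\Z^n$ on its boundary, which is the classical certificate of maximality for $b+\Z^n$ free sets (a maximal lattice-free set must have a lattice point in the relative interior of each facet, and $2^n$ points in $\R^n$ in "general enough" position force maximality). The normal vectors: each facet of $G$ is spanned by $2^{n-1}$ of the cube vertices plus structure from the recursion; I would derive the $a^i$ explicitly by induction — in the recursive step, the facets of $G$ come in two groups, those inherited from the $2^{n-1}$ facets of the expanded copy of $B$ (paired with the $n$-th coordinate data $\gamma$ and $\mu$) and those obtained by coning the expanded interval $\frac{1}{1-\mu}(I_{\lfloor\gamma\rfloor}-\gamma)+\gamma$ over facets of $B$, giving $2\cdot 2^{n-1} = 2^n$ facets; normalizing each to the form $a^i \cdot x \le 1$ (possible since $0 \in \intr(b+G)$) yields the claimed vectors, with explicit formulas in terms of $c, \gamma, \mu$.

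Finally, for a genuine generalized cross-polyhedron $G = X \times \R^{n-m}$ with $X \subseteq \R^m$ a generalized cross-polytope: the freeness and maximality transfer because $b+\Z^n$ freeness of $G$ is equivalent to $(b_{1..m}+\Z^m)$-freeness of $X$ after projecting out the last $n-m$ coordinates (the lineality space $\{0\}^m \times \R^{n-m}$ does not meet any shifted lattice constraint in those coordinates), and maximality of a lattice-free set with lineality reduces to maximality of its "transversal" section; the $2^n$... — here I would note the theorem as stated says $2^n$ facets only in the generalized cross-polytope case of (i), so for the cylinder I only claim freeness/maximality, obtained from the $m$-dimensional statement. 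I expect the main obstacle to be the explicit bookkeeping of the normal vectors $a^i$ in the recursive step of item (i): tracking how the scalings by $\frac{1}{\mu}$ and $\frac{1}{1-\mu}$ and the recentering at $c$ propagate into the facet inequalities, and verifying that exactly $2^n$ distinct facets arise with no redundancy, is the delicate computational core; the freeness and maximality claims, by contrast, follow fairly cleanly once item (ii) is in hand, since item (ii) already localizes all lattice points of $G$ to the vertices of a single unit cube.
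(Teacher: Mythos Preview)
The paper does not give a self-contained proof of this theorem: part~(i) is deferred to \cite[Theorem~5.3]{averkov2015lifting} and \cite[Theorem~4.1]{basu-paat-lifting}, and part~(ii) is declared to follow from ``a straightforward inductive argument which we omit in this paper.'' So for (ii) your inductive plan is exactly what the paper intends; the inner containment $z+[0,1]^n\subseteq G$ does come out cleanly from the identity
\[
(b,t)\;=\;\mu\Bigl(\tfrac{1}{\mu}(b-c)+c,\;\gamma\Bigr)+(1-\mu)\Bigl(c,\;\tfrac{1}{1-\mu}(t-\gamma)+\gamma\Bigr),
\]
which shows $B\times I_{\lfloor\gamma\rfloor}\subseteq G$. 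The outer containment, however, is more delicate than your ``case on $\lambda$ large or small'' suggests. When $\lambda\le\mu$ you only get that the first $n-1$ coordinates lie in $B$, which by induction places \emph{all but one} of them in $z'+[0,1]^{n-1}$---but the $n$-th coordinate may simultaneously fall outside $[\lfloor\gamma\rfloor,\lfloor\gamma\rfloor+1]$, giving two bad coordinates rather than one. When $\lambda>\mu$ the $n$-th coordinate is fine, but the first $n-1$ coordinates equal $\tfrac{\lambda}{\mu}(b'-c)+c$ with $\tfrac{\lambda}{\mu}>1$, which need not lie in $B$ at all; in fact for $n\ge 3$ one can take $\lambda=1$ and $b'$ on an edge of $B$ to produce a point of $G$ at height $\gamma$ with two of the first $n-1$ coordinates outside the cube. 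So your dichotomy does not close, and the outer inclusion (as literally stated) needs a more careful argument than a two-case split.

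For part~(i) you take a genuinely different route than the paper, attempting to derive $\Z^n$-freeness and maximality directly from (ii) rather than invoking the coproduct theorems. Both steps have real gaps. For freeness: the union $\bigcup_j\bigl((z+[0,1]^n)+\ell_j\bigr)$ contains many lattice points beyond the $2^n$ cube vertices (for instance $z+2e^1$), so knowing $G$ lies inside this union does not by itself force $G\cap\Z^n$ to be only the cube vertices; you still need the coproduct structure to exclude the extras. For maximality: having $2^n$ lattice points on $\bd(G)$ is \emph{not} the classical certificate---the criterion is one lattice point in the relative interior of \emph{each facet}, and your ``general enough position'' remark does not establish this. Verifying that the recursive construction places exactly one cube vertex in the relative interior of each of the $2^n$ facets is precisely the content of the theorems the paper cites, and reproducing that from scratch is substantially more work than your sketch indicates.
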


{Part (i) of Theorem~\ref{thm:gcp-facts} follows from~\cite[Theorem 5.3]{averkov2015lifting}, or its generalization~\cite[Theorem 4.1]{basu-paat-lifting}. Part (ii) follows from a straightforward inductive argument which we omit in this paper.}

Next we recall the definition of lattice width and the flatness theorem, which we need to prove \cref{thm:approx}.
\begin{definition}[Width function and lattice width]\label{Def:width}
    For every nonempty subset $X\subset \mathbb{R}^n$, the \emph{width function } $w(X,\circ):\mathbb{R}^n\mapsto[0,\infty]$ of $X$ is defined to be 
    \begin{align}
        w(X,u) \quad&:=\quad \sup_{x\in X}x\cdot u - \inf_{x\in X}x.u
    \end{align}
    The \emph{lattice width} of $X$ is defined as 
    \begin{align}
        w(X) \quad&:=\quad \inf_{u\in \mathbb{Z}^n\setminus \left \lbrace 0\right \rbrace}w(X,u)
    \end{align}
\end{definition}
\begin{definition}[Flatness]\label{Def:Flt}
    The \emph{Flatness function}  is defined as
    \begin{align}
        \Flt(n) \quad&:=\quad \sup \left \lbrace w(B): B \text{ is a $b+\Z^n$ free set in } \mathbb{R}^n\right \rbrace
    \end{align}
\end{definition}
\begin{theorem}\cite[Flatness theorem]{Barvinok2002}\label{thm:Flt}
    $\Flt(n)\leq n^{{5}/{2}}$ for all $n \in \N$.
\end{theorem}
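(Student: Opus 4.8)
The plan is to recover the classical geometry-of-numbers proof of the flatness theorem: reduce an arbitrary lattice-free convex body to an ellipsoid via John's theorem, and then bound the lattice width of a lattice-free ellipsoid through a transference inequality relating the covering radius of a lattice to the shortest vector of its dual. Two preliminary reductions come first. Since the width $w(X,u)$ is invariant under translating $X$, and a $b+\Z^n$ free set is merely a translate of a $\Z^n$ free set, the quantity $\Flt(n)=\sup\{w(K):\intr(K)\cap\Z^n=\emptyset\}$ is independent of $b$; and since $w(\cdot,u)$ is monotone under inclusion, a truncation/compactness argument lets me assume the supremising set $K$ is a convex body, which is the only case I would treat.

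Next I would apply John's theorem: there is an ellipsoid $E=c+A\bB$, with $\bB$ the Euclidean unit ball and $A$ symmetric positive definite, such that $E\subseteq K\subseteq c+n(E-c)$. Because $E\subseteq K$, the ellipsoid $E$ is itself lattice-free, and for every $u\in\Z^n$ the inclusion $K\subseteq c+n(E-c)$ gives $w(K,u)\le n\,w(E,u)$, hence $w(K)\le n\,w(E)$. Thus it suffices to prove the ellipsoidal estimate $w(E)\le \alpha\,n^{3/2}$ for a lattice-free ellipsoid; multiplying by the John factor $n$ then yields $w(K)\le\alpha\,n^{5/2}$, and calibrating $\alpha$ delivers the stated bound.

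For the ellipsoidal estimate I would pass to lattice invariants. A direct computation gives $w(E,u)=2\lVert Au\rVert$, so $w(E)=2\min_{u\in\Z^n\setminus\{0\}}\lVert Au\rVert=2\,\lambda_1(A\Z^n)$, twice the length of a shortest nonzero vector of the lattice $A\Z^n$. On the other hand $m\in\intr(E)$ iff $\lVert A^{-1}(m-c)\rVert<1$, so $E$ being lattice-free is equivalent to $\dist(A^{-1}c,\,A^{-1}\Z^n)\ge 1$, which forces the covering radius $\mu(M)\ge 1$ of the lattice $M:=A^{-1}\Z^n$. As $A$ is symmetric, $A\Z^n=(A^{-1}\Z^n)^{*}=M^{*}$, so the ellipsoidal estimate becomes exactly the transference statement that $\mu(M)\ge 1$ implies $\lambda_1(M^{*})\le\tfrac{\alpha}{2}\,n^{3/2}$.

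The transference inequality is the crux and the main obstacle. I would derive $\mu(M)\,\lambda_1(M^{*})=O(n^{3/2})$ from Minkowski's first and second theorems together with the classical comparisons between the successive minima of $M$, those of $M^{*}$, and the covering-radius bound $\mu(M)\le\tfrac12\sum_i\lambda_i(M)$; the delicate part is squeezing these geometry-of-numbers constants so that the product with the nonsymmetric John factor lands at exactly $n^{5/2}$ rather than a cruder polynomial bound. It is worth noting that the exponent $5/2$ is an artifact of the elementary tools and of allowing non-symmetric bodies: John's theorem for a general body costs a full factor $n$ (rather than $\sqrt n$ in the symmetric case), and a sharper transference such as Banaszczyk's $\mu(M)\lambda_1(M^{*})=O(n)$ would lower the ellipsoidal estimate to $O(n)$ and the overall bound below $n^{5/2}$; only the exponent, and not the strategy, is sensitive to which transference one invokes.
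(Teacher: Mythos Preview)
The paper does not give its own proof of this statement: it is quoted as a known result with a citation to Barvinok's textbook and is used only as a black box (all that is actually needed downstream is that $\Flt(n)$ is finite and bounded by some explicit function of $n$). Your outline is precisely the classical geometry-of-numbers argument that Barvinok presents --- John's ellipsoid followed by a transference inequality for the covering radius --- so in that sense you are reproducing the cited proof rather than comparing with anything the authors do.

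One small caution: the theorem as stated has no leading constant, whereas your sketch leaves ``calibrating $\alpha$'' as the last step. The elementary transference $\mu(M)\lambda_1(M^*)\le C\,n^{3/2}$ obtained from Minkowski's second theorem and the bound $\mu(M)\le\tfrac12\sum_i\lambda_i(M)$ does not naturally produce $C=\tfrac12$, so getting the constant-free bound $n^{5/2}$ (rather than $O(n^{5/2})$) requires either a sharper transference or a more careful accounting than your sketch indicates. For the purposes of this paper the distinction is immaterial, since the only use of $\Flt(n)$ is inside the factor $\bigl(s\,4^{n-1}\Flt(n)\bigr)^{n-1}$ in \cref{thm:gcp-main}, where any polynomial bound would do.
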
 

The main goal of this section is to establish the following result, which immediately implies \cref{thm:approx}.


\begin{theorem}\label{thm:gcp-main} Let $b \in \Q^n\setminus \Z^n$ such that the largest denominator in a coordinate of $b$ is $s$. Let $L$ be a $b+\Z^n$ free set with $0\in \intr(L)$.  
Then there exists a generalized cross-polyhedron $G$ such that $B:=b + G$ is a $b+\Z^n$ free convex set such that $\left(\frac{1}{s4^{n-1}\Flt(n)}\right)^{n-1} L \subseteq B$.
\end{theorem}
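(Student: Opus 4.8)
The plan is to construct the generalized cross-polyhedron $G$ recursively, matching the recursive definition of generalized cross-polytopes, and to track how much $L$ must be shrunk at each of the $n-1$ recursive steps so that the shrunk copy of $L$ stays inside the set being built. First I would handle the trivial base case $n=1$: any $b+\Z^1$ free set $L$ with $0 \in \intr(L)$ is contained in some interval of length at most $\Flt(1)$ (in fact exactly $1$, but the flatness bound suffices), and there is an interval $I_a = [a,a+1]$ with $a\in\Z$ such that, after translating by $b$, the set $b + I_a$ contains $L$ up to the claimed scaling (here the exponent $n-1 = 0$ makes the scaling factor $1$, so one just needs $L \subseteq b + I_a$ for a suitable integer $a$, which follows because $b\notin\Z$ and $L$ is $b+\Z$ free). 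Then for the inductive step I would pick a rational direction $u\in\Z^n$ realizing (approximately) the lattice width of $L$; by \cref{thm:Flt} the width of $L$ in this direction is at most $\Flt(n) \le n^{5/2}$. Using a unimodular change of coordinates I may assume $u = e_n$, so $L$ is sandwiched between two hyperplanes $x_n = \gamma$ and $x_n = \gamma'$ with $\gamma' - \gamma \le \Flt(n)$; the rationality of $b$ (denominators dividing $s$) lets me choose the integer floor values and the slab so that the relevant lattice points and the translate $b + \{\cdot\}$ behave well, contributing the factor $s$.

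The heart of the argument is the following geometric step. Having trapped $L$ in the slab $\gamma \le x_n \le \gamma + \Flt(n)$, I would project $L$ onto $\R^{n-1}$ to get a $(b'+\Z^{n-1})$ free set $\bar L$ (where $b'$ is the first $n-1$ coordinates of $b$ — one must check it is not integral, else $L$ would not be $b+\Z^n$ free, or handle that degenerate case separately by adding a free direction and passing to a generalized cross-polyhedron of the product form $X\times\R^{n-m}$). By induction there is a generalized cross-polytope $\bar B = b' + \bar G$ in $\R^{n-1}$ containing $\left(\tfrac{1}{s 4^{n-2}\Flt(n-1)}\right)^{n-2}\bar L$. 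Now I build the $n$-dimensional generalized cross-polytope from $\bar G$, the center $c$, the height parameter $\gamma$, and a parameter $\mu\in(0,1)$ chosen so that the ``fat'' horizontal slice $\tfrac{1}{\mu}(\bar G - c)+c$ at height $\gamma$ is large enough to cover the shrunk projection of $L$, while the vertical spike $\tfrac{1}{1-\mu}(I_{\lfloor\gamma\rfloor}-\gamma)+\gamma$ is tall enough to reach across the slab of height $\Flt(n)$. A convexity/interpolation estimate then shows that the convex hull of these two pieces contains $L$ scaled down by a factor that is the product of the inductive factor and a new factor of order $\tfrac{1}{s\,\Flt(n)}$ coming from this step — the extra $4^{n-1}$ is the slack I would build in to make the two competing requirements (wide enough slice versus tall enough spike, and keeping $-b$ in the interior so that \cref{thm:gcp-facts}(i) applies and $b+G$ is genuinely $b+\Z^n$ free and maximal) compatible.

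I expect the main obstacle to be this last balancing act: making the single scaling factor simultaneously large enough that the horizontal cross-section covers $\bar L$ (scaled), large enough that the vertical spike spans the width-$\Flt(n)$ slab, and small enough that the resulting generalized cross-polytope $G$ still has $-b$ in its interior so that Theorem~\ref{thm:gcp-facts}(i) certifies $b+G$ as $b+\Z^n$ free. Getting an honest constant here — rather than an implicit one — requires carefully quantifying, via a convex-combination argument along the segment from a point of the horizontal slice to an endpoint of the vertical spike, how the cross-polytope "tapers" as one moves away from height $\gamma$, and checking that a point of $L$ at an intermediate height is still captured after the global rescaling. A secondary technical point is the bookkeeping of the unimodular transformations: each recursive step introduces one, and one must verify that composing them still yields a unimodular $U\in\R^{n\times n}$ and that the rational data (denominator bound $s$) is preserved under these transformations, so that the parameter $s$ in the statement does not grow with the recursion depth. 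The degenerate cases where an intermediate projection $b'$ becomes integral, or where $L$ is unbounded in some direction, are handled by the product construction $X\times\R^{n-m}$ allowed in \cref{Def:gen-cross-poly}, and should be dealt with at the outset to streamline the induction.
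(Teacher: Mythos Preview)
Your outline shares the right scaffolding with the paper --- induction on $n$, use of the Flatness Theorem to single out a thin direction $e_n$, and assembling the cross-polytope one coordinate at a time --- but there is a real gap at the inductive step, and the paper fills it with an ingredient you do not have.

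First, the claim that the projection $\bar L$ of $L$ onto $\R^{n-1}$ is $(b'+\Z^{n-1})$-free is false in general: lattice-freeness is not preserved under projection (a long thin $b+\Z^2$-free body tilted against the axes projects to an interval containing many points of $b'+\Z$). The paper never projects; it takes the \emph{slice} $\{x_n=0\}$, which is $(b'+\Z^{n-1})$-free when $b_n=0$. Second --- and this is the missing idea --- the paper does not apply induction directly to $L$ but first invokes \cref{lem:set-with-simplex} to replace $L$ by an $S$-free generalized \emph{simplex} $P$, at the cost of exactly one factor $\tfrac{1}{s4^{n-1}\Flt(n)}$, with a facet parallel to $\{x_n=0\}$ and height at most $1$. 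Because $P$ is a pyramid $P=\conv(F\cup\{v\})$, checking that the constructed cross-polytope $B$ contains the relevant scaled copy of $P$ reduces to two clean inclusions, $v\in B$ and $F\subseteq B$. Your ``convexity/interpolation estimate'' would instead have to show that a double-cone cross-polytope captures an \emph{arbitrary} convex body in a slab, which is precisely the obstacle you flag and which the simplex lemma is designed to remove. Finally, your handling of the case $b'\in\Z^{n-1}$ is inverted: $b'\in\Z^{n-1}$ does not preclude $L$ being $b+\Z^n$-free (take $b_n\notin\Z$), and in the paper this is the \emph{easy} branch --- after shrinking by $s$ the set sits inside the split $\{b_n-1\le x_n\le b_n\}$, which is already a generalized cross-polyhedron.
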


Let us quickly sketch why \cref{thm:gcp-main} implies \cref{thm:approx}. 

\begin{proof}[Proof of \cref{thm:approx}] We claim that  $\alpha = \left (s4^{n-1}\Flt(n)\right)^{n-1}$ works. {Gauge functions satisfy the properties that $A \subseteq B$ implies that $\psi_A \geq \psi_B$, and $\psi_{\gamma A} = \frac{1}{\gamma}\psi_A$ for any $\gamma\geq 0$~\cite{rock}. Thus,} \cref{thm:gcp-main} implies that for any maximal $b+\Z^n$ free set $L$, there exists a generalized cross-polyhedron $B$ such that $\psi_B\leq \alpha \psi_L$, consequently, by~\eqref{eq:trivial-lifting}, $\tilde\psi_B\leq \alpha \tilde\psi_L$. Thus, $H_{B}(R,P)\subseteq \alpha H_L(R,P)$ and we are done.
\end{proof}

The rest of this section is dedicated to proving \cref{thm:gcp-main}. We need to first introduce some concepts and intermediate results, and the final proof of \cref{thm:gcp-main} is assembled at the very end of the section.


\begin{definition}[Truncated cones and pyramids]\label{Def:Pyramid}
Given an $n-1$-dimensional closed convex set $M\subset \mathbb{R}^{n}$, a vector $v\in \mathbb{R}^n$ such that $\aff(v+M) \neq \aff(M)$, and a scalar $\gamma\in \R_+$, we say that the set $T(M,v,\gamma):=\cl(\conv\{M \cup (\gamma M + v)\})$ is a {\em truncated cone} (any set that can be expressed in this form will be called a truncated cone). 

A truncated cone with $\gamma = 0$ is called a {\em pyramid} and is denoted $P(M,v)$. If $M$ is a polyhedron, then $P(M,v)$ is a {\em polyhedral pyramid}. $v$ is called the apex of $P(M,v)$ and $M$ is called the base of $P(M,v)$. The {\em height} of a pyramid $P(M,v)$ is the distance of $v$ from the affine hull of $M$.

When $M$ is a hyperplane, the truncated cone is called a {\em split}.
\end{definition}

\begin{definition}[Simplex and Generalized Simplex]\label{Def:Gen-Simplex} A simplex is the convex hull of affinely independent points. Note that a simplex is also a pyramid. In fact, any facet of the simplex can be taken as the base, and the height of the simplex can be defined with respect to this base.

A {\em generalized simplex} in $\R^n$ is given by the Minkowski sum of a simplex $\Delta$ and a linear space $X$ such that $X$ and $\aff(\Delta)$ are orthogonal to each other. Any facet of $\Delta+X$ is given by the Minkowski sum of a base of $\Delta$ and $X$. The height of the generalized simplex with respect to such a facet is defined as the height of $\Delta$ with respect to the corresponding base.
\end{definition}
We first show that $b+\Z^n$ free generalized simplices are a good class of polyhedra to approximate other $b+\Z^n$ free convex bodies within a factor that depends only on the dimension. This result is a mild strengthening of Proposition 29 in~\cite{averkov2017approximation} and the proof here is very similar to the proof of that proposition.
\begin{lemma}\label{lem:set-with-simplex} Let $n\in \N$ and $b \in \Q^n\setminus \Z^n$ such that the largest denominator in a coordinate of $b$ is $s$. Let $S = b + \Z^n$. Then for any $S$-free set $L \subseteq \R^n$, there exists an $S$-free generalized simplex $B = \Delta + X$ (see \cref{Def:Gen-Simplex}) such that $\frac{1}{s4^{n-1}\Flt(n)}L \subseteq B$. Moreover, after a unimodular transformation, $B$ has a facet parallel to $\{x \in \R^n: x_n = 0\}$, the height of $B$ with respect to this facet is at most 1, and $X = \R^{m} \times \{0\}$ for some $m < n$. 
\end{lemma}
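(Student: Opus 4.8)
The plan is to follow the strategy of Proposition~29 in~\cite{averkov2017approximation}, adapted to the lattice $S = b+\Z^n$ and with the extra bookkeeping required to extract the ``generalized simplex plus coordinate subspace'' conclusion. The starting observation is that, up to losing a dimension-dependent factor, every $S$-free set can be replaced by one that is ``lower-dimensional in an essential way.'' Concretely, if $L$ is $S$-free with $0\in\intr(L)$, the flatness theorem (\cref{thm:Flt}) gives a vector $u\in\Z^n\setminus\{0\}$ with $w(L,u)\le \Flt(n)$. After a unimodular change of coordinates we may assume $u = e_n$, so $L$ lies between two hyperplanes $\{x_n = t_1\}$ and $\{x_n = t_2\}$ with $t_2 - t_1 \le \Flt(n)$. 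The crucial point is that in the $x_n$-direction the lattice $S$ has points spaced $1/s'$ apart for some divisor $s'$ of $s$ (here is where the denominator bound enters), so that one of the $S$-lattice hyperplanes $\{x_n = \text{const}\}$ can be used as a facet; rescaling $L$ in the $x_n$-direction by a factor of order $1/(s\,\Flt(n))$ produces a set of height at most $1$ sandwiched against such a lattice hyperplane, and this smaller set is still $S$-free since we only shrank it.

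Next I would set up the induction on $n$. The base case $n=1$ is an interval, which is trivially a generalized simplex. For the inductive step, project $L$ (after the rescaling above) onto $\R^{n-1}\times\{0\}$; the projection $\bar L$ is free for the projected lattice $\bar S = \bar b + \Z^{n-1}$, whose coordinate denominators are still bounded by $s$. By the induction hypothesis there is a $\bar S$-free generalized simplex $\bar B = \bar\Delta + X'$ in $\R^{n-1}$ with $\frac{1}{s4^{n-2}\Flt(n-1)}\bar L \subseteq \bar B$, with $\bar\Delta$ a genuine simplex and $X'$ a coordinate subspace. I would then ``lift'' $\bar B$ back to $\R^n$: take a pyramid (or generalized simplex) over $\bar\Delta$ of height at most $1$ whose base sits in the lattice hyperplane isolated in the first step, with apex chosen so that the resulting body still misses $S$. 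Because $L$ has $x_n$-extent at most $1$ after rescaling and is squeezed against the lattice hyperplane, a scaled copy of $L$ (losing another factor of order $1/4$ and $1/s$) fits inside this lifted body; multiplying the accumulated factors gives exactly $\frac{1}{s4^{n-1}\Flt(n)}$, and the factor $4^{n-1}$ is what absorbs the loss incurred at each of the $n-1$ inductive lifting steps.

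The last item is to verify the structural ``moreover'' claims. The facet parallel to $\{x_n=0\}$ is the base of the pyramid constructed in the lifting step; that its height is at most $1$ is built into the construction (we rescaled precisely to arrange this). The linear space $X$ is just $X'$ (viewed in $\R^n$), which is a coordinate subspace by the inductive hypothesis, so $X = \R^m\times\{0\}$ for some $m<n$; and $\Delta$ is the pyramid over $\bar\Delta$, which remains a simplex since coning a simplex over a new apex (affinely independent from the base) yields a simplex. I expect the main obstacle to be the combinatorial-geometric verification that the lifted body is genuinely $S$-free: one must check that no lattice point of $b+\Z^n$ slips into the interior between the base hyperplane and the apex, which requires using both the $S$-freeness of $\bar L$ in the projection and a careful choice of where to place the apex relative to the fractional part $b_n$ — this is the step where the argument of~\cite{averkov2017approximation} has to be reproduced with care, and where the ``mild strengthening'' (tracking the denominator $s$ and the explicit coordinate-subspace form of $X$) demands attention beyond a verbatim citation.
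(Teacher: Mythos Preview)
Your plan has the right inductive shape, but it contains a genuine gap at the reduction-to-dimension-$(n-1)$ step, and it misses a case distinction that the paper's argument hinges on.

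First, you propose to \emph{project} the rescaled $L$ onto $\R^{n-1}\times\{0\}$ and claim that the projection $\bar L$ is $\bar S$-free for $\bar S=\bar b+\Z^{n-1}$. This is false in general: projections of $S$-free convex sets are not $\bar S$-free (a thin lattice-free slab in $\R^2$ projects to all of $\R$). The paper does \emph{not} project; it takes the cross-section $L'\cap\{x_n=0\}$, and this only makes sense in the case $b_n=0$ (after the unimodular change of coordinates), because then the $S$-points lying on $\{x_n=0\}$ form exactly $\bar S\times\{0\}$ and the slice is genuinely $\bar S$-free. When $b_n\neq 0$, the paper does not recurse at all: since $b$ has denominators at most $s$, one has $b_n\in[1/s,1-1/s]$, and the scaled set $L'=\frac{1}{s4^{n-2}\Flt(n)}L$ (of lattice width $\le 1/s$ and containing $0$) fits inside the split $\{b_n-1\le x_n\le b_n\}$, which is already a generalized simplex with $X=\R^{n-1}\times\{0\}$. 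Your description of the lattice as having points ``spaced $1/s'$ apart'' in the $x_n$-direction is off---the points of $b+\Z^n$ are spaced exactly $1$ apart along $e_n$; the denominator $s$ enters only through the location $b_n\in[1/s,1-1/s]$ in the nonzero case, and this is also why your bookkeeping accumulates an extra $1/s$ at every inductive step that the final constant $\frac{1}{s4^{n-1}\Flt(n)}$ does not have.

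Second, your ``lift $\bar B$ back to $\R^n$ by taking a pyramid over $\bar\Delta$'' is too vague to succeed as stated: you still have to argue that some concrete pyramid simultaneously (a) contains a fixed fraction of the rescaled $L$ and (b) remains $S$-free. The paper handles this in the $b_n=0$ case by tilting each facet halfspace $H_i'$ of $B'$ to a halfspace $H_i\subseteq\R^n$ that still contains $L'$ (via a separation argument between $L'$ and $\cl(\R^{n-1}\setminus H_i')\times\{0\}$), forming the truncated cone $P=H_1\cap\cdots\cap H_k\cap\{-1/s\le x_n\le 1/s\}$, and then invoking Lemma~25(b) of~\cite{averkov2017approximation} to pass from the truncated cone $P$ to a generalized simplex $B$ with $\tfrac14 P\subseteq B\subseteq P$. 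That factor-$4$ lemma is what produces the $4^{n-1}$; without it (and the tilting step that guarantees $L'\subseteq P$), you have no mechanism to place the apex and no control on the containment.
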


\begin{proof} We proceed by induction on $n$. For $n=1$, all $S$-free sets are contained in a $b+\Z$ free interval, so we can take $B$ to be this interval. For $n\geq 2$, consider an arbitrary $S$-free set $L$. By \cref{thm:Flt}, $L' := \frac{1}{s4^{n-2}\Flt(n)}L$ has lattice width at most $\frac{1}{s}$. Perform a unimodular transformation such that the lattice width is determined by the unit vector $e^n$ and $b_n \in [0,1)$.  

If $b_n \neq 0$, then $b_n \in [1/s, 1-1/s]$, and therefore $L'$ is contained in the split $\{x: b_n - 1 \leq x_n \leq b_n\}$. We are done because all splits are generalized simplices and $\frac{1}{s4^{n-1}\Flt(n)}L = \frac{1}{4}L' \subseteq L' \subseteq B:= \{x: b_n - 1 \leq x_n \leq b_n\}$.

If $b_n = 0$, then $L \cap \{x : x_n = 0\}$ is an $S'$-free set in $\R^{n-1}$, where $S' = (b_1, \ldots, b_{n-1})+ \Z^{n-1}$. Moreover, by the induction hypothesis applied to $L \cap \{x : x_n = 0\}$ and $L' \cap \{x : x_n = 0\}$ it follows that there exists an $S'$-free generalized simplex $B' \subseteq \R^{n-1} \times \{0\}$ such that $L' \cap \{x : x_n = 0\} \subseteq B'$. Let $B'$ be the intersection of halfspaces $H'_1, \ldots, H'_k \subseteq \R^{n-1}$. By a separation argument between $L'$ and $\cl(\R^{n-1}\setminus H'_i) \times \{0\}$, one can find halfspaces $H_1, \ldots, H_k\subseteq \R^n$ such that $H_i \cap (\R^{n-1} \times 0) = H'_i \times \{0\}$ and $L' \subseteq H_1 \cap \ldots \cap H_k$ (this separation is possible because $0\in \intr(L')$). 

We now consider the set $P := H_1 \cap \ldots \cap H_k \cap \{x: -1/s\leq x_n \leq 1/s\}$. By construction, $P\subseteq \R^n$ is $S$-free and $L' \subseteq P$ since $L'$ has height at most $\frac{1}{s}$ and contains the origin. $P$ is also a truncated cone given by $v = \frac{2}{s}e^n$ and $M = P \cap \{x : x_n = -1/s\}$ and some factor $\gamma$ (see \cref{Def:Pyramid}), because $B'$ is a generalized simplex. Without loss of generality, one can assume $\gamma \leq 1$ (otherwise, we change $v$ to $-v$ and $M$ to $P \cap\{x : x_n = 1\}$). By applying Lemma 25 (b) in~\cite{averkov2017approximation}, one can obtain a generalized simplex $B$ as the convex hull of some point $x \in P \cap\{x : x_n = \frac{1}{s}\}$ and $M$ such that $\frac{1}{4}P \subseteq B \subseteq P$ (the hypothesis for Lemma 25 (b) in~\cite{averkov2017approximation} is satisfied because $0$ can be expressed as the mid point of two points in $P \cap\{x : x_n = \frac{1}{s}\}$ and $P \cap\{x : x_n = -\frac{1}{s}\}$ ). Since $L' \subseteq P$, we have that $\frac{1}{s4^{n-1}\Flt(n)}L = \frac{1}{4}L' \subseteq \frac{1}{4}P \subseteq B$. Since $B \subseteq P$, $B$ is $S$-free.
\end{proof}

\begin{figure}[t]
\centering
\captionsetup{justification=centering}
\captionsetup[subfigure]{justification=centering}
\begin{subfigure}[b]{0.45\textwidth}
    \includegraphics[width=\textwidth]{./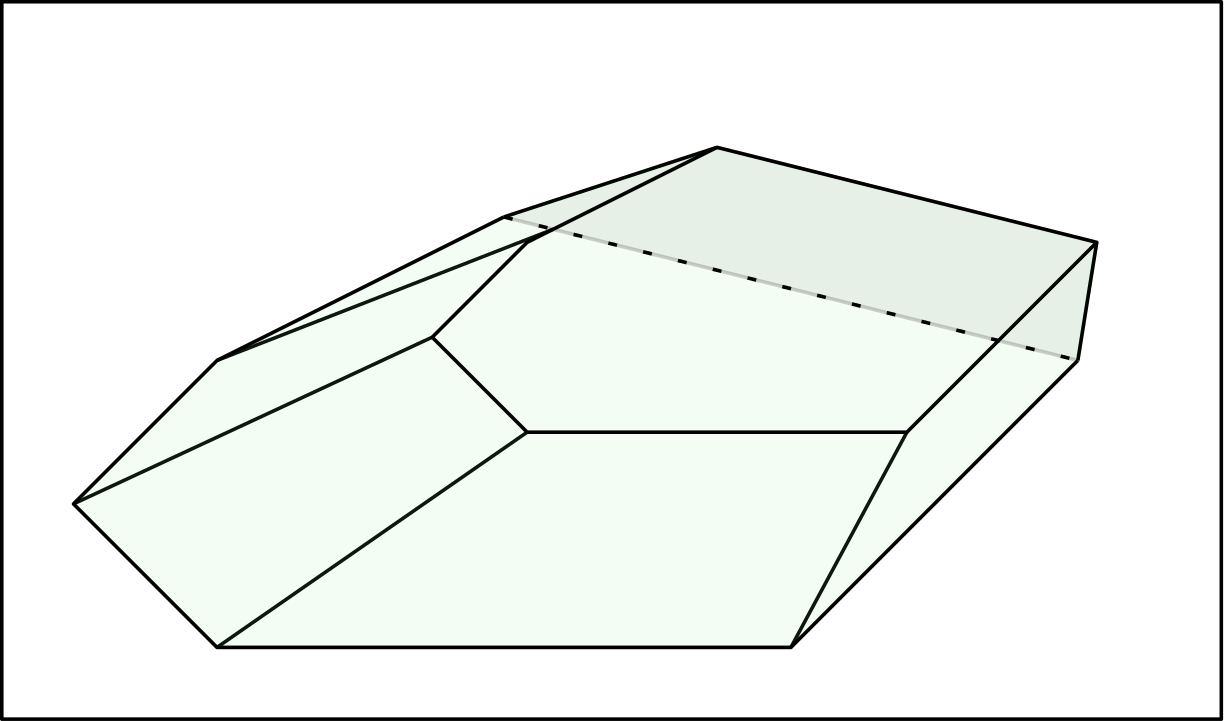}
    \caption{A $b+\Z^n$-free convex set that is to be approximated with a $b+\Z^n$-free simplex.}\label{fig:Simplex1}
\end{subfigure}\hfill
\begin{subfigure}[b]{0.45\textwidth}
    \includegraphics[width=\textwidth]{./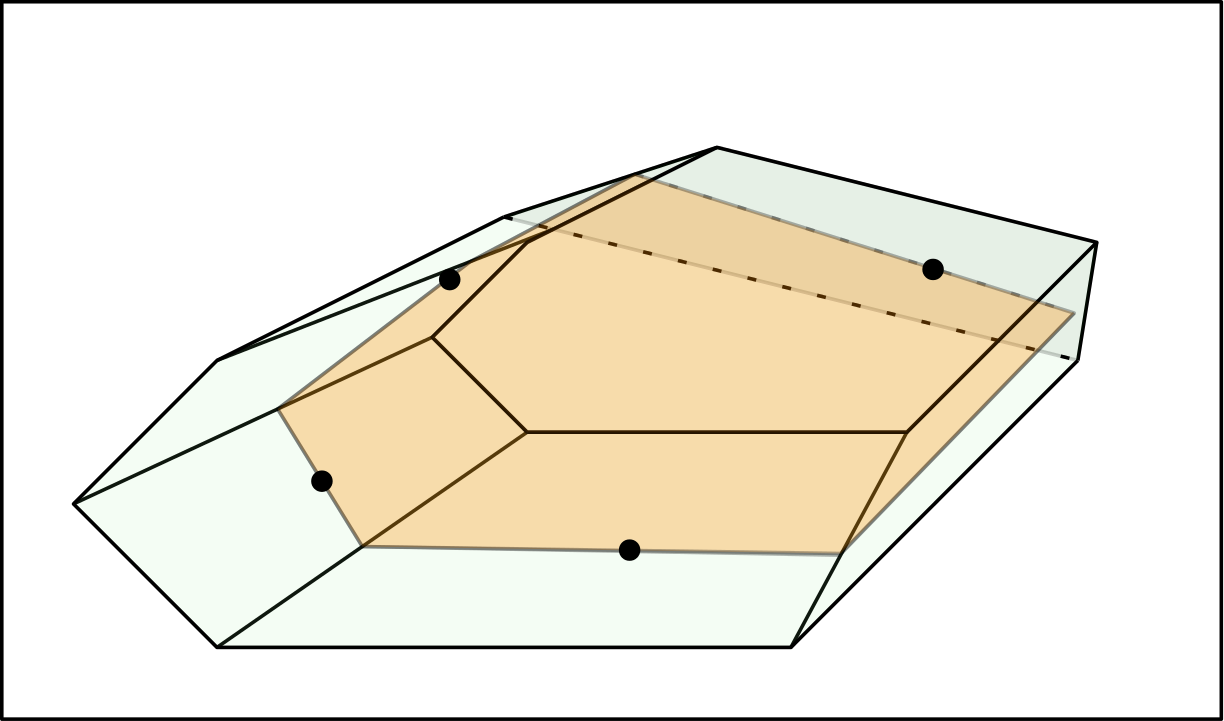}
    \caption{The integer lattice plane passing through the convex set is shown in orange.}\label{fig:Simplex2}
\end{subfigure}
\begin{subfigure}[b]{0.45\textwidth}
    \includegraphics[width=\textwidth]{./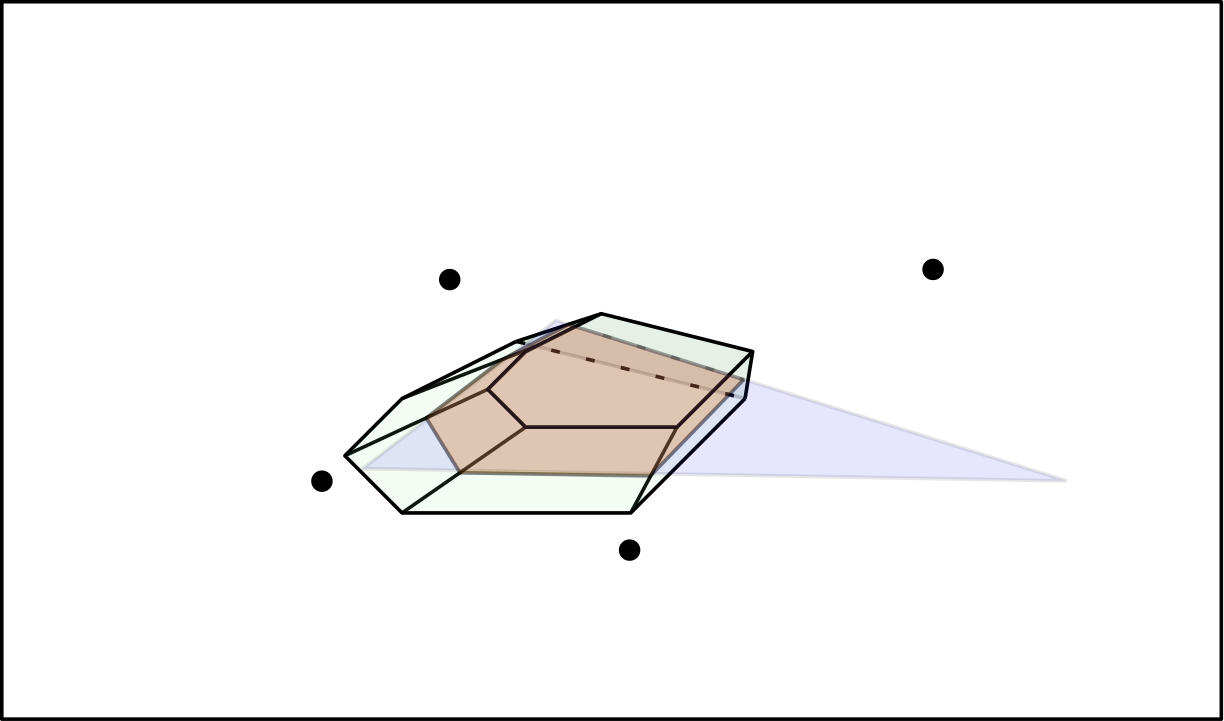}
    \caption{The set shown in orange is a \emph{lower-dimensional} $b+\Z^n$-free convex set. This can be approximated by a lower-dimensional simplex using the induction hypothesis.}\label{fig:Simplex4}
\end{subfigure}\hfill
\begin{subfigure}[b]{0.45\textwidth}
    \includegraphics[width=\textwidth]{./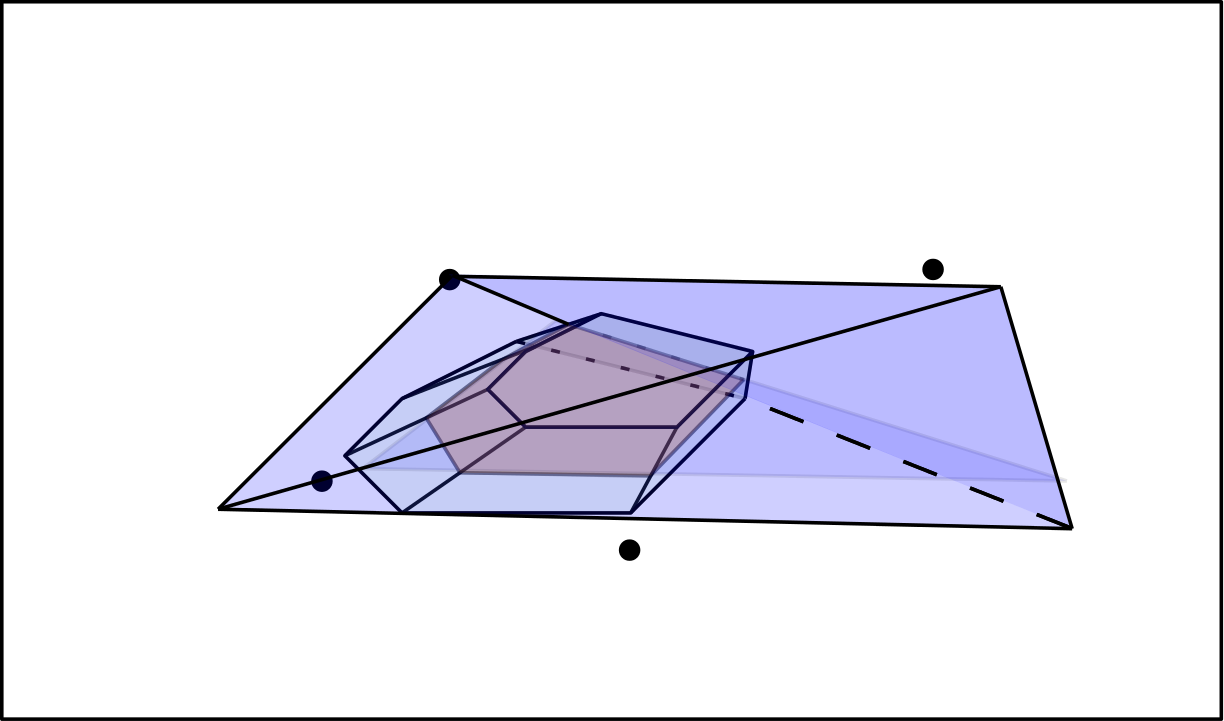}
    \caption{Hyperplanes can  be added that passes through the facets of the set in orange to get a truncated pyramid and then a simplex to approximate the given $b+\Z^n$-free set.}\label{fig:Simplex5}
\end{subfigure}
\caption[Approximating a $b+\Z^n$-free convex set with a simplex]{Intuition behind \cref{lem:set-with-simplex} to approximate a $b+\Z^n$-free convex set with a simplex.}\label{fig:SimplexXPoly}
\end{figure}

\begin{proof}[Proof of \cref{thm:gcp-main}] We proceed by induction on $n$. If $n=1$, then an $S$-free convex set is contained in an $S$-free interval, which is an $S$-free generalized cross-polyhedron, so we are done. 

For $n\geq 2$, by \cref{lem:set-with-simplex}, there exists an $S$-free generalized simplex $P = \Delta + X$ (see \cref{Def:Gen-Simplex}) such that $\frac{1}{s4^{n-1}\Flt(n)}L \subseteq P$. Moreover, after a unimodular transformation, $P$ has a facet parallel to $\{x \in \R^n: x_n = 0\}$ and the height of $P$ with respect to this facet is at most 1. Moreover, $X$ can be assumed to be $\R^{m} \times \{0\}$ for some $m < n$ {since $X$ has to be parallel to the facet defined by $x_n=0$}. Thus, by projecting on to the last $n-m$ coordinates, we may assume that $P$ is a simplex with a facet parallel to $\{x \in \R^n: x_n = 0\}$. Without loss of generality, we may assume $b_n \in [0,1)$ (by translating everything by an integer vector). We now consider two cases. 

If $b_n \neq 0$, then $b_n \in [1/s, 1-1/s]$. Moreover, $\frac{1}{s}P$ has height at most $\frac{1}{s}$, and therefore it is contained in the maximal $S$-free split $\{x: b_n - 1 \leq x_n \leq b_n\}$. We are done because all maximal $S$-free splits are generalized cross-polyhedra and $\left (\frac{1}{s4^{n-1}\Flt(n)} \right)^{n-1} L  \subseteq \frac{1}{s}P  \subseteq B:= \{x: b_n - 1 \leq x_n \leq b_n\}$.

If $b_n = 0$, then by the induction hypothesis, there exists a translated generalized cross-polyhedron $B' \subseteq \R^{n-1} \times \{0\}$ such that $\left (\frac{1}{s4^{n-2}\Flt(n-1)}\right )^{n-2}(P\cap \{x: x_n = 0\}) \subseteq B'$. Let $v$ be the vertex of $P$ with positive $v_n$ coordinate. Since the height of $P$ is at most 1, the height of $\left (\frac{1}{s4^{n-2}\Flt(n-1)}\right )^{n-2}P$ is also at most 1. Let the facet $F$ of  $\left (\frac{1}{s4^{n-2}\Flt(n-1)}\right )^{n-2}P$ parallel to $\{x \in \R^n: x_n = 0\}$ be contained in the hyperplane $\{x \in \R^n: x_n = \lambda\}$, where $-1 < \lambda < 0$ since $P$ has height at most 1 with respect to this facet. Moreover, we may assume that after a unimodular transformation, the projection of $v$ on to $\R^{n-1} \times \{0\}$ lies in $B'$, because the points from $S$ on the boundary of $B'$ form a lattice hypercube in $\R^{n-1}$ by \cref{thm:gcp-facts}(ii). Let this projected vertex be $c \in \R^{n-1}$. Let $\mu = 1- |\lambda|$ and $\gamma = \lambda$. Create the generalized cross-polyhedron $B$ from $B', c, \mu, \gamma$ in $\R^n$ as described in \cref{Def:gen-cross-poly}. By the choice of $\mu$ and $\gamma$ and the fact that $P$ has height at most 1, $v \in B$. 

We also claim that $F \subseteq (\frac{1}{\mu}(B'-c) + c) \times \{\gamma\} \subseteq B$. Indeed, observe that $$F - (c, \lambda)\subseteq \frac{1}{\mu}\bigg(\bigg(\left (\frac{1}{s4^{n-2}\Flt(n-1)}\right )^{n-2}P \cap \{x \in \R^n: x_n = 0\}\bigg) - (c,0)\bigg).$$ Since $\left (\frac{1}{s4^{n-2}\Flt(n-1)}\right )^{n-2}(P\cap \{x: x_n = 0\}) \subseteq B'$, we have $F \subseteq \left (\frac{1}{\mu}(B'-c) + c\right )   \times \{\gamma\}$.

Thus, we have that $\left (\frac{1}{s4^{n-2}\Flt(n-1)}\right )^{n-2}P \subseteq B$ since $v \in B$ and $F \subseteq B$. Combining with $\frac{1}{s4^{n-1}\Flt(n)}L \subseteq P$, we obtain that $$\left (\frac{1}{s4^{n-1}\Flt(n)} \right)^{n-1} L \subseteq \left (\left (\frac{1}{s4^{n-2}\Flt(n-1)}\right )^{n-2}\right ) \frac{1}{s4^{n-1}\Flt(n)}L \subseteq B$$
\end{proof}

\section{Algorithms for trivial lifting in generalized cross-polyhedra}
\label{sec:Algorithm_for_trivial_liftings}

The key fact that we utilize in designing an algorithm to compute the trivial liftings of generalized cross-polyhedra is the following{: generalized cross-polytopes have the so-called \emph{covering property}. We refer the readers to \cite{bcccz} for the implications that the covering property leads to and especially to \cite[Theorem 5]{bcccz} which shows that the covering property is necessary and sufficient to ensure that the trivial lifting is the unique minimal lifting.} 

{\cite[Section 4]{basu-paat-lifting} discusses the coproduct operation used to construct the generalized cross-polytopes. \cite[Theorem 4.1]{basu-paat-lifting} assures that as long as the ``initial'' sets used in the coproduct operation have the covering property, so does the final set. In our construction of generalized cross-polytopes, the corresponding initial sets are $b+\Z$ free intervals, which have the covering property.}

{Having the covering property is important for computations in the following way: it implies existence of the so-called lifting region $T$ (first defined in \cite{dw}) corresponding to the generalized cross-polyhedra such that $T+\Z^n = \R^n$. Then, one can calculate the trivial lifting at a point $x$ by calculating the gauge at $x+z$ where $z\in\Z^n$ and $x+z\in T$ (such a $z$ always exists because $T+\Z^n = \R^n$). We formalize this in the theorem below.}

\begin{theorem}\label{thm:structure-gcp-tl} Let $G\subseteq \R^m$ be any generalized cross-polytope and let $b \in \R^m\setminus \Z^m$ such that $-b \in \intr(G)$. There is a subset $T \subseteq G$ such that $T + \Z^m = \R^m$ and for any $p\in \R^m$, there exists $\tilde p \in b+T$ such that $\tilde p \in p + \Z^m$ and $\widetilde{\psi_{b+G}}(p) = \psi_{b+G}(\tilde p)$. 
\end{theorem}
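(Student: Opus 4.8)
The plan is to take $b+T$ to be exactly the set of points of $B:=b+G$ that minimize the gauge $\psi_{B}$ over their own $\Z^m$-coset, and to reduce the theorem to the single claim that the truncation at $1$ in the definition~\eqref{eq:trivial-lifting} of the trivial lifting is never active for $B=b+G$. Concretely, I would set
\[
   T \;:=\; -b \;+\; \bigl\{\, y \in B \;:\; \psi_{B}(y) \le \psi_{B}(y+z) \text{ for all } z\in\Z^m \,\bigr\}.
\]
Then $b+T\subseteq B=b+G$ gives $T\subseteq G$ for free, and everything else follows once we understand $\min_{z\in\Z^m}\psi_{B}(p+z)$.

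First I would record two elementary facts about $B$. (a) By \cref{thm:gcp-facts}(ii) there is $z_0\in\Z^m$ with $z_0+[0,1]^m\subseteq G$, hence $(b+z_0)+[0,1]^m\subseteq B$; since the integer translates of a unit cube cover $\R^m$, every coset $p+\Z^m$ meets $B$, so $\inf_{z\in\Z^m}\psi_{B}(p+z)\le 1$ for every $p$. (b) A generalized cross-\emph{polytope} is bounded, so $B$ is a compact convex body with $0\in\intr B$; hence $\psi_{B}$ is coercive (i.e.\ $\psi_B(x)\to\infty$ as $\|x\|\to\infty$) and $\{z\in\Z^m : p+z\in B\}$ is finite for each $p$. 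Combining (a) and (b): for every $p$ the quantity $\mu_0(p):=\min_{z\in\Z^m}\psi_{B}(p+z)$ is attained, satisfies $\mu_0(p)\le 1$, and is attained at some $\tilde p\in B$ with $\tilde p\equiv p\pmod{\Z^m}$. In particular $\widetilde{\psi_{B}}(p)=\min\{1,\mu_0(p)\}=\mu_0(p)=\psi_{B}(\tilde p)$.

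It then remains to check the three assertions about $T$. Since $\tilde p+\Z^m=p+\Z^m$, the point $\tilde p$ minimizes $\psi_{B}$ over its own coset, so $\tilde p\in b+T$; together with $\tilde p\in p+\Z^m$ and $\widetilde{\psi_{B}}(p)=\psi_{B}(\tilde p)$ this is precisely the pointwise conclusion of the theorem. The same $\tilde p$ shows that $b+T$ meets every coset $p+\Z^m$, i.e.\ $(b+T)+\Z^m=\R^m$, and hence $T+\Z^m=\R^m$; and $T\subseteq G$ was observed above.

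The only step that is not pure bookkeeping is fact (a): that $B$ contains an integer translate of the unit cube, which forces $\widetilde{\psi_{B}}$ to agree with the coset-minimum of $\psi_{B}$. This is exactly where the special structure of generalized cross-polytopes (equivalently, the covering property) enters — for a generic maximal $b+\Z^m$ free set no such cube need exist and the truncation at $1$ can be active. I would also note that this argument produces $T$ only implicitly; the \emph{explicit} lifting region needed by the algorithm developed in the rest of this section is obtained by unwinding the recursive construction of \cref{Def:gen-cross-poly} (equivalently, the coproduct structure of \cite[Section 4]{basu-paat-lifting}) to describe $b+T$ as a controlled union of polyhedral pieces, one contributed at each stage of the recursion, but that refinement is not needed for the existence statement proved here.
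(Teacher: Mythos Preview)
Your argument is correct. The key observation --- that \cref{thm:gcp-facts}(ii) places an integer translate of the unit cube inside $G$, hence inside $B=b+G$, so every coset $p+\Z^m$ meets $B$ and the truncation at $1$ in~\eqref{eq:trivial-lifting} is never active --- together with compactness of $B$ (so the coset infimum is attained) does give exactly the existence statement. Your set $b+T$ is precisely the set of points where $\widetilde{\psi_B}=\psi_B$, i.e.\ the lifting region in the sense of the theorem.

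The paper takes a different route: rather than arguing directly from the unit-cube inclusion, it invokes the \emph{covering property} of generalized cross-polytopes, which is inherited through the coproduct construction from the one-dimensional pieces (\cite[Theorem 4.1]{basu-paat-lifting}), and then appeals to the general fact (going back to~\cite{dw,bcccz}) that the covering property yields a lifting region $T$ with $T+\Z^m=\R^m$ on which the trivial lifting coincides with the gauge. Your route is more elementary and entirely self-contained within the paper, since it uses only \cref{thm:gcp-facts}(ii) and basic properties of gauges; the paper's route, on the other hand, yields the stronger conclusion that the trivial lifting is in fact the \emph{unique minimal} lifting and produces the explicit polyhedral description of $T$ that drives \cref{alg:CoProdLift}. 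You correctly flag this distinction in your last paragraph. One small caveat: your parenthetical ``(equivalently, the covering property)'' is not quite right --- containing a lattice translate of the unit cube is what you actually use, and this is sufficient for your argument but not the same statement as the covering property for the lifting region; I would drop the word ``equivalently''.
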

Thus, for any generalized cross-polyhedron $G\subseteq \R^m$ and $p\in \R^m$, if one can find the $\tilde p$ in \cref{thm:structure-gcp-tl}, then one can compute the trivial lifting coefficient $\tilde \psi_{b+G}(p)$ by simply computing the gauge function value $\psi_{b+G}(\tilde p)$. The gauge function can be computed by simple evaluating the $2^m$ inner products in the formula $\psi_{b+G}(r) = \max_{i=1}^{2^m} a^i\cdot r$, where $a^i, i = 1, \ldots, 2^m$ are the normal vectors as per \cref{thm:gcp-facts}(i). 

Thus, the problem boils down to finding $\tilde p$ from \cref{thm:structure-gcp-tl}, for any $p\in \R^m$. Here, one uses property (ii) in \cref{thm:gcp-facts}. 
%
%
This property guarantees that given a generalized cross-polytope $G\subseteq \R^m$, there exists $\bar z \in \Z^n$ that can be explicitly computed using the $\gamma$ values used in the recursive construction, such that $T \subseteq G \subseteq \cup_{j=1}^m ((\bar z+[0,1]^m) + \ell_j )$, where $\ell_j$ is the {1-dimensional linear subspace parallel to the} $j$-th coordinate axis obtained by setting all coordinates to $0$ except coordinate $j$. Now, for any $p\in \R^m$, one can first find the (unique) translate $\hat p \in p + \Z^n$ such that $\hat p \in b+\bar z + [0,1]^m$ (this can be done since $b$ and $z$ are explicitly known), and then $\tilde p$ in \cref{thm:structure-gcp-tl} must be of the form $\hat p + Me^j$, where $M \in \Z$ and $e^j$ is one of the standard unit vectors in $\R^m$. Thus, $$\tilde\psi_{b+G}(p) = \min_{\substack{j \in \{1, \ldots, m\}, \\ M \in \Z}} \psi_{b+G}(\hat p + Me^j).$$ For a fixed $j\in \{1, \ldots, m\}$, this is a one dimensional convex minimization problem over the integers $M \in \Z$ for the piecewise linear convex function $\phi_j(\lambda) = \psi_{b+G}(\hat p + \lambda e^j) = \max_{i=1}^{2^m} a^i\cdot (\hat p + \lambda e^j)$. Such a problem can be solved by simply sorting the slopes of the piecewise linear function (which are simply $a^1_j, \ldots, a^{2^n}_j$), and finding the point $\bar \lambda$ where the slope changes sign. Then either $\phi_j(\lceil \bar\lambda\rceil)$ or $\phi_i(\lfloor \bar\lambda\rfloor)$ minimizes $\phi_j$. Taking the minimum over $j=1, \ldots, m$ gives us the trivial lifting value for $p$. 

One observes that this entire procedure takes $O(m2^m)$. While this was described only for generalized cross-polytopes, generalized cross-polyhedra of the form $G \times \R^{n-m}$ pose no additional issues: one simply projects out the $n-m$ extra dimensions.

We give a formal description of the algorithm below in \cref{alg:CoProdLift}. We assume access to procedures $\textsc{GetNormal}(G, b)$ and $\textsc{Gauge}(G,b,x)$. $\textsc{GetNormal}(G, b)$ takes as input a generalized cross-polytope $G$ and $b$ such that $-b \in \intr(G)$, and returns the list of normals $\{a^1, \ldots, a^{2^n}\}$ such that $b + G = \{x \in \R^n : a^i\cdot x \leq 1, \;\; i =1, \ldots, 2^n\}$ (property (i) in \cref{thm:gcp-facts}). $\textsc{Gauge}(G,b,r)$ takes as input a generalized cross-polytope $G$ and $b$ such that $-b \in \intr(G)$ and a vector $r$, and returns $\psi_{b+G}(r)$ (given the normals from $\textsc{GetNormal}(G, b)$, one simply computes the $2^n$ inner products $a^i\cdot r$ and returns the maximum). 

\begin{algorithm}
\begin{algorithmic}[1]
    \Require Generalized cross-polytope $G\subseteq \R^n$, $b\in \R^n\setminus\Z^n$ such that $-b\in \intr(G)$. $p \in \R^n$ where the lifting is to be evaluated.
    
    \Ensure $\widetilde{\psi_{b+G}}(p)$
    \Function{CrossPolyLift}{$G,\,\b,\,\x$}
    	\State Set $\bar z \in \R^n$ using parameters of $G$ as given in property (ii) in \cref{thm:gcp-facts}. 
        \State Compute unique $\hat p \in (p + \Z^n)\cap \b+\bar z + [0,\,1]^n$.  
        \State Let $\mathcal N = \textsc{GetNormal}(G, b)$ be the set of normals.
        \For {Each coordinate $j$ from $1$ to $n$}
        \State Find $a^{-} \in \arg\max_{a \in \mathcal N}\{a_j : a_j \leq 0\}$ where $a_j$ denotes the $j$-th coordinate of $a \in \mathcal N$). Break ties by picking the one with maximum $a\cdot \hat p$.
        \State Find $a^{+} \in \arg\min_{a \in \mathcal N}\{a_j : a_j > 0\}$ where $a_j$ denotes the $j$-th coordinate of $a \in \mathcal N$). Break ties by picking the one with maximum $a\cdot \hat p$.
        \State $\bar\lambda \gets \frac{a^+\cdot \hat p - a^-\cdot \hat p}{a^- - a^+}$. 
        \State $m_j \gets \min\{a^+\cdot \hat p + \lceil \bar\lambda \rceil a^+_j, a^-\cdot \hat p + \lfloor \bar\lambda \rfloor a^-_j\}$.
      	 \EndFor
       \State \Return $\min\{1,m_1, \ldots, m_j\}$.
    \EndFunction
\end{algorithmic}
\caption{Trivial lifting of a generalized cross-polytope}
\label{alg:CoProdLift}
\end{algorithm}

\section{Computational Experiments and Results} \label{sec:CompExpr}
{In this section we give results from a set of computational experiments comparing the cuts described in this paper against Gomory's Mixed Integer (GMI) cuts, {and also CPLEX computations at the root node}. We perform four types of computational tests:}
\begin{enumerate}
	\item {Testing on random dense instances of pure-integer and mixed-integer programs.}
	\item {Testing stable-set problem instances and vertex-cover problem instances in random graphs.}
	\item {Testing on MIPLIB3.0 problem instances.}
	\item {Testing an approximation to the closure of the cuts from all generalized cross-polytopes, on the random dense instances of mixed-integer programs.} 
	\end{enumerate}
{In the following subsections, we describe the terms used above, the exact testing procedure adopted and our results in these problems.  We observe that, despite the strong theoretical results, the performance of the cuts derived from the generalized cross-polytopes {in our particular computational set-ups is generally poor. As mentioned in the Introduction, we suspect that this is because our naive sampling of the cuts is not good enough and the cut selection problem for this family we propose is still a non trivial problem.} }

\subsection{Test on random dense instances}
\label{subsec:Dense}

First we describe the test we performed on random dense instances of pure and mixed-integer programs. We describe our problem generation procedure, cut generating procedure, comparison procedure in the following paragraphs. The testing procedure is also summarized in \cref{alg:CompTest}.


\subsubsection{Data generation}
e write all our test problems in the canonical form 
\begin{align}
\min_{x\in\R^d}&\left\{c^Tx : Ax = b; x\geq 0; i\in\mathcal{I} \implies x_i\in\Z \right \}
\end{align}
where $A\in\R^{k\times d}, b\in\R^k, c\in\R^d$ and $\mathcal{I}\subseteq \{1,2,\ldots,n\}$.\\

We generated {roughly 12,000} problems in the following fashion. 
\begin{itemize}
    \item Each problem can be pure integer or mixed integer. For mixed-integer problem, we decide if each variable is discrete or continuous randomly with equal probability.
	\item{Each problem can have the data for $A,\,b$ and $c$ as matrices with either integer data or rational data. {Each entry is uniformly distributed between -10 and 10. In the former case, only integers are considered and in the latter case, rational numbers represented upto 8 decimal places are considered. Thus the matrix $A$ is a dense matrix.}}
    \item{The size of each problem varies from $(k,\,d) \in \left \lbrace (10i,\,25i): i\in\left \lbrace 1,\,2,\,\ldots,\,10\right \rbrace\right \rbrace$.}
    \item{There are roughly $300$ realizations of each type of problem.}
\end{itemize}
This leads to $2\times 2\times 10\times 300 \,(\text{roughly}) \approx 12,000$ problems in all. {The entire data set can be found at this hyperlink: \href{http://www.ams.jhu.edu/~abasu9/Data_Sets/}{http://www.ams.jhu.edu/$\sim$abasu9/Data\_Sets/}.} This number is not precise as some random problems where infeasibility or unboundedness were discovered in the LP relaxation were ignored. Below we present the results for these approximately 12,000 problems as a whole and also the performance of our methods in various subsets of these instances.

\subsubsection{Cut generation}\label{sec:cut-generation}
We consider three types of cuts in these computational tests - Gomory's mixed-integer (GMI) cuts, X-cuts and GX-cuts. GMI cuts are single row cuts obtained from standard splits~\cite[Eqn 5.31]{conforti2014integer}. 
GX-cuts are cuts obtained from certain structured generalized cross-polytopes defined in \cref{Def:gen-cross-poly}. X-cuts are obtained from a special case of generalized cross-polytopes, where the center $(c,\,\gamma)$ coincides with the origin. It should be noted that the GMIs are indeed a special case of X-cuts, because they can be viewed as cuts obtained from {$b+\Z^n$ free} intervals or one-dimensional generalized cross-polytopes whose center coincide with the origin. In this section, we call such cross-polytopes as \emph{regular cross-polytopes}. This motivates the set inclusions shown in \cref{fig:CutTypeVenn}. The motivation behind classifying a special family of cross-polytopes with centers coinciding with the origin is the algorithmic efficiency they provide. Because of the special structure in these polytopes, the gauges and hence the cuts can be computed much faster than what we can do for an arbitrary generalized cross-polytope {(comparing with the algorithms in \cref{sec:Algorithm_for_trivial_liftings})}. In particular, the gauge and the trivial lifting can both be computed in $O(n)$ time, as opposed to $O(2^n)$ and $O(n2^n)$ respectively for the general case (see \cref{sec:Algorithm_for_trivial_liftings}), where $n$ is the dimension of the generalized cross-polytopes or equivalently, the number of rows of the simplex tableaux used to generate the cut.

The family of generalized cross-polytopes that we consider can be parameterized by a vector $\mu\in (0,1)^n$ and another vector in $f\in\mathbb{R}^n$. This vector consists of the values $\mu_i$ used in each stage of construction of the cross-polytope, after appropriate normalization (see \cref{Def:gen-cross-poly}). This actually forces $\sum_{i=1}^{n}\mu_i = 1$. The vector $f$ corresponds to the center of the generalized cross-polytope; the coordinates of $f$ give the coordinates of $c$ and $\gamma$ in the iterated construction of \cref{Def:gen-cross-poly}. Both the parameters $\mu$ and $f$ show up in \cref{alg:CompTest}. The regular cross-polytopes are obtained by setting $f=\mathbf{0}$ in the above construction; thus, they are parameterized by only the vector $\mu\in (0,1)^n$. As long as $\sum_{i=1}^{n}\mu_i = 1$, there exists a one-to-one map between such vectors and the set of regular cross-polytopes in $\mathbb{R}^n$. 
\begin{figure}[t]
    \centering
    \includegraphics[width=0.5\textwidth]{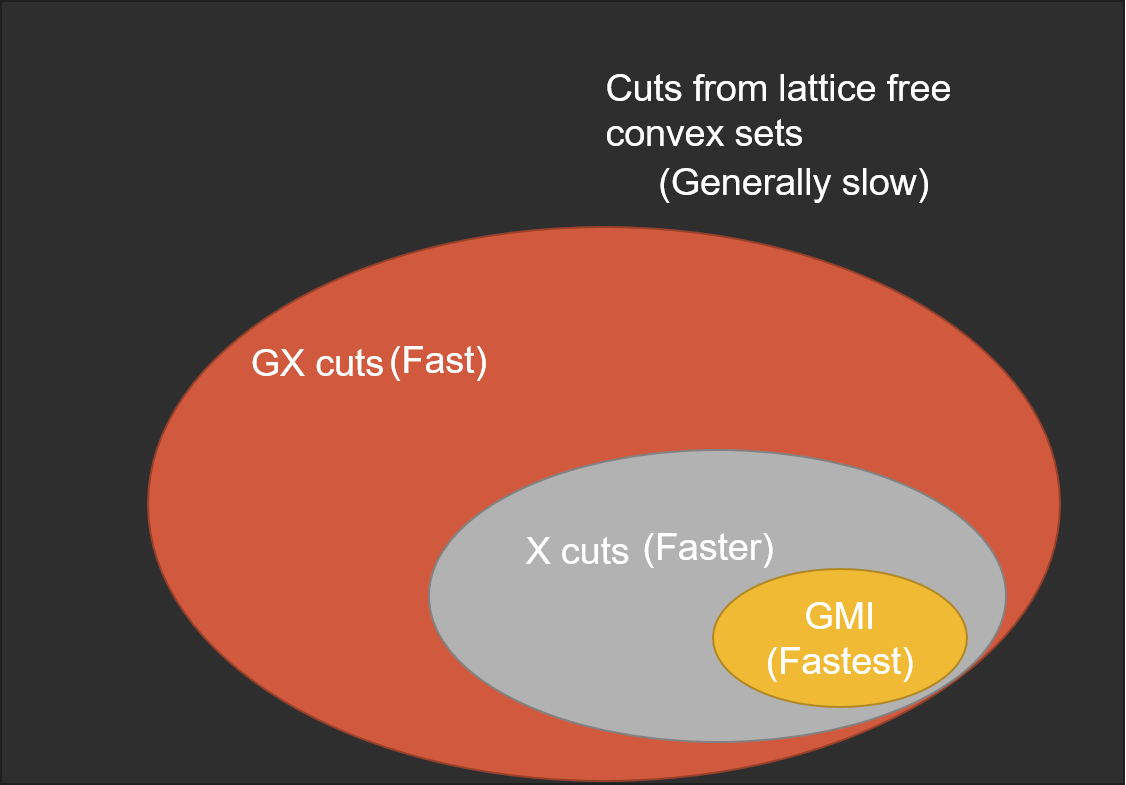}
    \caption{Venn diagram showing inclusions of various types of cuts and algorithmic efficiencies to generate them.}\label{fig:CutTypeVenn}
\end{figure}

\begin{algorithm}[h]
    \begin{algorithmic}[1]
        \Require A mixed-integer problem (MIP) in standard form. Number $N \geq 2$ of rows to use to generate multi-row cuts; Number $k\geq 1$ of multi-row cuts; Number $\ell\geq 1$ of rounds of multi-row cuts to be used; Number of $1 \leq q \leq N$ non-integer basics to be picked for GX-cuts.
        \State   \lp$\gets$ Objective of LP relaxation of MIP.
        \State{In the final simplex tableaux, apply GMI cuts on all rows whose corresponding basic variables are constrained to be integer in the original problem, but did not turn out to be integers. }
        \State \gmi $\gets$ Objective of LP relaxation of MIP and GMI cuts.
        \For {$i$ from $1$ to $\ell$}
            \For {$j$ from $1$ to $k$}
                \State Generate $\mu \in [0,\,1]^N$ such that $\sum_{\xi = 1}^N\mu_\xi = 1$. Also randomly select $N$ rows where integrality constraints are violated for corresponding basic variables.
                \State Generate an X-cut from the generated $\mu$ and the chosen set of rows.
                \State Generate $f \in [0,\,1]^N$ randomly. 
                \State Randomly select rows such that $q$ of them correspond to rows that violate the integrality contraints and $N-q$ of them don't.
                \State Generate a GX-cut from the generated $\mu,\, f$ and the set of rows.
            \EndFor
            \State $\X_i\gets$ Objective of LP relaxation of MIP and all the X-cuts generated above.
            \State $\XG_i\gets$ Objective of LP relaxation of MIP with all the X-cuts as well as the GMI cuts.
            \State $\GX_i\gets$ Objective of LP relaxation of MIP and all the GX-cuts generated above.
            \State $\GXG_i\gets$ Objective of LP relaxation of MIP with all the GX-cuts as well as the GMI cuts.
        \EndFor
        \State $\X\gets \max_{i=1}^\ell \X_i$; $\XG\gets \max_{i=1}^\ell \XG_i$; $\GX\gets \max_{i=1}^\ell \GX_i$; $\GXG\gets \max_{i=1}^\ell \GXG_i$. 
        \State $\Best \gets \max \left \lbrace \X, \XG, \GX, \GXG \right \rbrace$
        \State \Return $\lp,\, \gmi,\, \X,\,\XG,\,\GX,\,\GXG, \Best$
    \end{algorithmic}
\caption{Computational testing procedure}
\label{alg:CompTest}
\end{algorithm}

{We also note that \emph{any} cut generated from a generalized cross-polytope, or for that matter, any valid pair (see \cref{Def:ValidPair}) cuts off the fractional solution. This is because, the fractional solution obtained corresponds to $s=0$ and $y=0$ in the space $(s,y)$ using the notation in \eqref{eq:cut}. So no matter what the values of $\psi(r_i)$ and $\pi(p_i)$ are, the LHS of the inequality in \eqref{eq:cut} is 0 for the fractional point. Thus the {current fractional LP solution} is \emph{always} separated from the convex hull.}

\subsubsection{Comparison procedure}

In each of the problems, the benchmark for comparison was an aggressive addition of GMI cuts. The procedure used for comparison is mentioned in \cref{alg:CompTest}. We would like to emphasize that X-cuts and GX-cuts are an infinite family of cuts unlike the GMI cuts. However, we add only finitely many cuts from this infinite family. 

In all the computational tests in this paper, these cuts are randomly generated without looking into any systematic selection of rows or $\mu$. However to improve the performance from a completely random selection, we generate $\ell$ batches of $k$ cuts and only keep the best set of $k$ cuts. We lay out our testing procedure in detail in \cref{alg:CompTest}.

For the set of 12,000 problems, X-cuts and GX-cuts were generated with $N = 2,\, 5, \text{ and }10$ rows. For GX-cuts, the number $q$ of rows to be picked whose corresponding basic variables violate integrality constraints, was chosen to be 1. This was found to be an ideal choice under some basic computational tests with small sample size, where cuts with different values of $q$ were compared. {Also, a qualitative motivation behind choosing $q=1$} is as follows: GMI cuts use information only from those rows where integrality constraints { on the corresponding basic variables} are violated. To beat GMI, { it is conceivably more} useful to use information not already available for GMI cuts, and hence to look at rows where the integrality constraint {on the corresponding basic variable} is not violated.

\subsubsection{Results}\label{sec:random-results} 
A typical measure used to compute the performance of cuts is \emph{gap closed} which is given by $\frac{\mathsf{cut}-\lp}{\mathsf{IP}-\lp}$. However the IP optimal value $\mathsf{IP}$ could be expensive to compute on our instances. So, as a first test, we use a different metric, which compares the performance of the best cut we have, against that of GMI cuts. Thus we define 
 \begin{align}
        \beta \quad&=\quad \frac{\Best - \gmi}{\gmi - \lp},
            \end{align} 
            which tries to measure the {\em improvement} over GMI cuts using the new cuts.


\begin{table}[h]
\centering
 \caption{Results}
\begin{tabularx}{\textwidth}{|p{4cm}|X|X|X|X|}
    \hline
    \textbf{Filter} & \textbf{Number of problems} & \textbf{Cases where $\gmi < \Best$} & \textbf{Average of $\beta$} & \textbf{Average of $\beta$ when GMI is beaten}\\
    \hline
    None (All problems) & 13604  & 6538 (48.06\%) & 2.00\% & 4.15\% \\
    \hline
    Rational Data & 6600 &  3213 (48.68\%) & 2.11\% & 4.23\%\\
    \hline
    Integer Data & 7004 &  3325 (47.47\%) & 1.90\% & 3.80\%\\
    \hline
    \parbox{4cm}{Pure Integer problems} & 6802 &  2189 (32.18\%) & 0.69\% &2.146\% \\
    \hline
    \parbox{4cm}{Mixed Integer problems} & 6802 &  4376 (64.33\%) & 3.32\% & 5.159\%\\
    \hline
    \parbox{4cm}{Rational Data \\ Pure Integer problems} & 3300 &  1078 (32.67\%) & 0.75\% &2.306\% \\
    \hline
    \parbox{4cm}{Rational Data\\ Mixed Integer problems} & 3300 &  2135 (64.70\%) & 3.48\% & 5.376\%\\
    \hline
    \parbox{4cm}{Integer Data \\ Pure Integer problems} & 3502 &  1111 (31.52\%) & 0.63\% &1.996\% \\
    \hline
    \parbox{4cm}{Integer Data\\ Mixed Integer problems} & 3502 &  2241 (63.42\%) & 3.17\% & 4.95\%\\
    \hline
\end{tabularx}
\label{tab:results}
\end{table}

The testing procedure mentioned in \cref{alg:CompTest} was run with the values of $k=\ell=5$. The results hence obtained are mentioned in \cref{tab:results}. Besides this table, we present some interesting observations from our computational testing.

\begin{enumerate}
    \item{In mixed-integer problems, we have $\beta \geq 10\%$ in 648 cases (which is 9.53\% of the set of mixed-integer problems). In pure-integer problems we have $\beta\geq5\%$ in 320 cases (which is 4.7\% of the set of pure-integer problems).} A conclusion from this could be that the family of cuts we are suggesting in this paper works best when we have a good mix of integer and continuous variables. We would like to remind the reader that in the mixed-integer examples we considered, roughly half the variables were continuous, due to a random choice between presence or absence of integrality constraint for each variable.

    \item{We also did some comparisons between $N=2, 5, 10$ row cuts. In particular, let us define $\beta_2,\, \beta _5$ and $\beta_{10}$ as the values of $\beta$ with $N=2, 5, 10$ respectively. Among the 13,604 cases, only in 265 cases we found $\beta_5 > \beta_2$ or $\beta_{10} > \beta_2$ (the inequalities are considered strictly here). In 264 of these cases, $\max\{\beta_5, \beta_{10}\} > \gmi$ (the inequality is strict here).
 In these 265 cases, 62 were pure-integer problems and GMI was beaten in all 62 problems. The other 203 cases were mixed integer problems. GMI was beaten in 202 of these problems.}

    We conclude that when cuts derived from higher dimensional cross-polytopes dominate cut obtained from lower dimensional cross-polytopes, then the cuts from the higher dimensional cross-polytopes dominate GMI cuts as well. In other words, if we find a good cut from a high dimensional cross-polytope, then we have a very useful cut in the sense that it adds significant value over GMI cuts. 
    
    \item{Another test was done with increasing the number $k$ which corresponds to the number of GX cuts added, from a constant 10 to half the number of GMI cuts in the problem (recall that for the results reported in \cref{tab:results}, $k = 5$). Integer data was used in this, and this test was performed in a smaller randomly generated sample of size 810. In pure integer cases, we beat GMI in about 25\% cases and in mixed-integer problems, we beat GMI in 61\% cases. The value of $\beta$ is comparable to \cref{tab:results} in both cases. But the lack of significant improvement suggests the following. The performance of cross-polytope based cuts is {determined more by the problem instance characteristics, rather than the choice of cuts}. If these cuts work well for a problem, then it should be reasonably easy to find a good cut. }
	\item{ Further there were 4 problems, all mixed-integer, with $\beta > 100\%$ suggesting {potential} that there could be a set of problems on whom a very good choice of rows and $\mu$ could give a non-trivial improvement {over the GMI cuts}.}
    \item{As far as the time taken to run these instances goes, for the number of rows considered in this test, most of the time is typically spent in solving the LP relaxation after addition of cuts, accessing the simplex tableaux {to generate the cut} etc., rather than actually computing the cut.}
\end{enumerate}

\subsection{Performance in random graph instances}
{Inspired by the notion that most of the integer programming problems of interest are sparse and have an underlying structure in them, we tested the cuts from the family of generalized cross-polyhedra on two graph problems namely, the stable set problem and the vertex cover problem. Both these problems are NP-complete by themselves and can be posed as an IP. Let $G = (V, E)$ be a graph. \Cref{eq:STABLESET} is the stable set problem and \cref{eq:VERTEXCOVER} is the vertex cover problem.} 
\begin{subequations}
\begin{align}
	\max_{x_v} \quad&:\quad \sum_{v\in V}x_v & \text{subject to }\\
	x_u + x_v \quad&\leq\quad 1 &\forall\,e=uv\in E\\
	x_v \quad&\in\quad \{0,1\} &\forall \,v\in V
\end{align}
	\label{eq:STABLESET}
\end{subequations}
\begin{subequations}
	\begin{align}
		\min_{x_v} \quad&:\quad \sum_{v\in V}x_v &\text{subject to}\\
		x_u + x_v \quad&\geq\quad 1 &\forall \,e=uv\in E\\
		x_v \quad&\in\quad \{0,1\} &\forall \,v\in V
	\end{align}
	\label{eq:VERTEXCOVER}
\end{subequations}
{We generated the graphs as follows. We fixed the number of vertices $|V|$ and generated an edge $e$ with a probability $p$. We generate 100 such instances for each value of $|V|$ and $p$. We varied $|V|$ from $5, 6, \ldots, 15$ and $p$ from $0.1$ to $0.9$ in increments of $0.1$. Both the stable set problem in \cref{eq:STABLESET} and the vertex cover problem in \cref{eq:VERTEXCOVER} problem were solved for these graphs. }

{We adopted a testing procedure analogous to the procedure mentioned in \cref{subsec:Dense}. However, in this setting, we never observed \emph{any} improvement whatsoever beyond the gain obtained using GMI cuts. }

\subsection{Performance in MIPLIB 3.0}
Our testing with the new cuts discussed in this paper had meagre to no improvement in most of MIPLIB problems. Apart from the type of test mentioned in \cref{alg:CompTest} above, we performed the following test motivated by~\cite{espinoza2010computing}. We ran the MIPLIB problem on CPLEX 12.7.1, stopping after all root node calculations before any branching begins (CPLEX typically adds several rounds of cuts at the root node itself). We keep count of number of cuts added by CPLEX. Now we allow up to 10 times the number of cuts added by CPLEX, iteratively solving the LP relaxation after the addition of each cut. In each round, the cut that gives the best $\beta$ among twenty five randomly generated cut is added. We count the number of cuts we had to add and hence the number of rounds of LP we solve, to obtain an objective value as good as CPLEX. However, in almost all cases adding even ten times as many cuts as CPLEX did, did not give us the objective value improvement given by CPLEX. 

Tests along the line of \cref{alg:CompTest} were also not promising. The only set of exceptions is the {\tt enlight} set of problems in MIPLIB 3.0. These are problems coming from the Enlight combinatorial game. The X-cuts did not show any improvement over GMI cuts. The performance of the GX-cuts are shown below in \cref{tab:Enlight}. It can be seen from \cref{tab:Enlight} that the performance of GX cuts increases with the number of rows used.

{We note that we want to test the efficacy of our general purpose cutting planes, and therefore avoid using any knowledge of the structure of the MIPLIB problems in our cut generation procedure. While there could certainly be a way to use problem structure in deploying these cuts better in practice, we consider this more sophisticated approach to be outside the scope of this current manuscript.}

\begin{table}[h]
\centering
\caption{Performance on Enlight problems. {The numbers reported are the optimal values of the LP after the corresponding cuts have been added (they are minimization problems).}}
\begin{tabularx}{\textwidth}{|X|p{1cm}|p{1cm}|X|X|X|X|}
    \hline
    {\bf Problem} & {\bf \lp} & {\bf \gmi} & {\bf 2 row \GX} & {\bf 5 row \GX} & {\bf 10 row \GX} &{\bf \textsf{IP}}\\
    \hline
    enlight9 & 0 & 1 & 1.1902 & 1.4501 & 1.9810 & INF\\
    \hline
    enlight13 & 0 & 1 & 1.1815 & 1.5410 & 1.9704 & 71 \\
    \hline
    enlight14 & 0 & 1 & 1.1877 & 1.5051 & 1.9195 & INF\\
    \hline
    enlight15 & 0 & 1 & 1.2001 & 1.4712 & 1.8991 & 69 \\
    \hline
    enlight16 & 0 & 1 & 1.1931 & 1.4934 & 1.8766 & INF\\
\hline
\end{tabularx}
\label{tab:Enlight}
\end{table}

\subsection{Approximating the exact closure of generalized cross polyhedra cutting planes}\label{sec:approximate-closure}
{Using the $\beta$ metric defined above, we see that the most significant improvement is on dense random instances. Thus, we tried to do a little more intensive testing on random dense instances by apronximating the exact closure of our family as best as we could. In other words, this is an attempt to optimize the linear function over the closure of the family of cuts  obtained from generalized cross-polyhedra. Because of the nonlinear relation between the cut coefficients and the parameter $\mu$ used in defining the cross-polytope, implementing an exact separation oracle to solve this problem requires us to solve a nonlinear optimization problem. Moreover, the bigger hurdle seems to be the lack of any easy way to decide which rows should be selected to generate the separating cut from the family. This makes the separation problem for the exact closure a large mixed-integer nonlinear optimization problem which we did not see an efficient way to solve. To simulate the effect of the exact closure, we instead add a large number ($\sim$1000) of random cuts from this family and compute the gap closed.}

{Since we are adding a lot of cuts compared to GMI, it makes more sense to consider the overall gap closed with respect to the optimal IP solution, as opposed to using the $\beta$ metric. For large random dense instances, solving the IP to optimality is usually very difficult. So we decided to focus on set of about 200 random instances with 40 constraints and 100 variables.}

{In these 200 problems, the gap closed given by $\frac{\Best-\lp}{\ip-\lp}$ is of the order of {5.51\%}. In comparison, GMI cuts already close {5.04\%} of the gap. While this improvement is not very large, it is non trivial, in our opinion. It seems to complements the 10\% improvement we saw in 10\% of the cases when evaluating using the $\beta$ metric (see point 1. in the discussion in Section~\ref{sec:random-results}). With the approximate closure this 10\% improvement (going from $\sim$5\% to $\sim$5.5\%) is now seen to be an average phenomenon as opposed to only in 10\% of the cases. Of course, one has to keep in mind that the approximate closure of our family uses a lot more cuts than the GMI closure; on the other hand, we are looking at gap closed as opposed to the $\beta$ metric now, so these numbers still tell us something about our family. 
}


\section{Limitation of the trivial lifting: Proof of Theorem~\ref{thm:bad-approx}}\label{sec:bad-approx}

In this section, we show that for a general $b+\Z^n$ free set, the trivial lifting can be arbitrarily bad compared to a minimal lifting. We first show that for $n=2$, there exist $b\in \R^2\setminus\Z^2$ such that one can construct maximal $(b+\Z^2)$-free triangles with the desired property showing that the trivial lifting of its gauge function can be arbitrarily worse than a minimal lifting.

\paragraph{Example in 2 dimensions:} Consider the sequence of Type 3 Maximal $b+\Z^n$ free triangles with $b = (-0.5, -0.5)$ given by the equations
\begin{subequations}
    \begin{align}
        20x -y+ 10.5 \quad&=\quad 0 \\
        \alpha_ix + y + \frac{1-\alpha_i}{2} \quad&=\quad 0 \\
        -\beta_ix + y + \frac{1+\beta_i}{2} \quad&=\quad 0 
    \end{align}
\end{subequations}
with $\alpha_i = 1+\frac{1}{i}$ and $\beta_i = \frac{1}{i}$. Let us call the sequence of triangles as $T_i$. The triangle $T_1$ is shown in Fig. \ref{fig:BadTrivialLift}. 

For all $i$, the point $p = (0.25,0)$ is located outside the region $T_i + \Z^n$. So clearly for all $i$, the trivial lifting evaluated at $p$ is at least 1. However, let us consider the minimum possible value any lifting could take at $p$. This is given by (see~\cite[Section 7]{dw}, \cite{fixing-region}):
\begin{align}
    \pi_{\min}(p) \quad&=\quad \sup_{\substack{z\in \mathbb{Z}^n\\ w\in \mathbb{R}^n\\ w+Np \in b+\mathbb{Z}^n}}\frac{1 - \psi_{T_i}(w)}{N}\\
    \quad&=\quad \sup_{\substack{N\in \mathbb{N}\\z\in \mathbb{Z}^n}} \frac{1-\psi_{T_i}(b-Np+z)}{N}\\
    \quad&=\quad \sup_{{N\in \mathbb{N}}} \frac{1-\inf_{z\in \mathbb{Z}^n}\psi_{T_i}(b-Np+z)}{N}\\
    \quad&=\quad \sup_{{N\in \mathbb{N}}} \frac{1-\widetilde{\psi_{T_i}}(b-Np)}{N}
\end{align}
In the current example, $b = (-0.5, -0.5)$ and $p = (0.5, 0)$. Hence points of the form $b - Np$ correspond to a horizontal one-dimensional lattice. i.e., points of the form $(-(N+1)/2,\, -0.5)$. Since all of these points are arbitrarily close to the side of $T_i + z$ for some $z\in \Z^2$ (as $i \to \infty$), $\widetilde{\psi_{T_i}}(b-Np) \geq 1 - \epsilon_i$ where $\epsilon_i \rightarrow 0$. This implies that the minimal lifting of the point could become arbitrarily close to zero, and the approximation $\frac{\widetilde{\psi}(p)}{\pi_{\min}(p)}$ could be arbitrarily poor.

The proof for general $n\geq 2$ can be completed in two ways. One is a somewhat trivial way, by considering cylinders over the triangles considered above. A more involved construction considers the so-called {\em co-product} construction defined in~\cite{averkov2015lifting,basu-paat-lifting}, where one starts with the triangles defined above and iteratively takes a co-product with intervals to get maximal $b+\Z^n$ free sets in higher dimensions. It is not very hard to verify that the new sets continue to have minimal liftings which are arbitrarily better than the trivial lifting, because they contain a lower dimension copy of the triangle defined above. We do not provide more details, because this will involve definitions of the coproduct construction and other calculations which do not provide any additional insight, in our opinion.

\begin{figure}[t]
    \centering
    \includegraphics{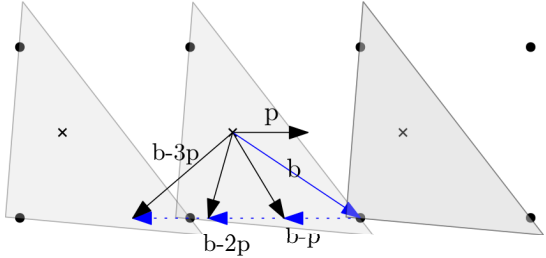}
    \caption{Example where trivial lifting can be very poor}
    \label{fig:BadTrivialLift}
\end{figure}

\bibliographystyle{siamplain}
\bibliography{library,full-bib}
\end{document}